\newcommand{\abs}[1]{\left\vert#1\right\vert}
\newcommand{\R}{\mathbb{R}}
\newcommand{\N}{\mathbb{N}}
\newcommand*{\cadlag}{c\`adl\`ag}
\newcommand{\brak}[1]{\left(#1\right)}    
\newcommand{\crl}[1]{\left\{#1\right\}}   
\DeclareMathOperator*{\essinf}{ess\,inf}
\begin{document}
\title{Stability and Markov Property of Forward Backward Minimal Supersolutions}
\author[a,1,t1]{Samuel Drapeau}
\author[b,2]{Christoph Mainberger}

\address[a]{School of Mathematical Sciences \& Shanghai Advanced Institute for Finance (CAFR/CMAR), Shanghai Jiao Tong University, 211 West Huaihai Road, Shanghai, 200030 China}
\address[b]{Technische Universit\"at Berlin, Stra\ss e des 17.~Juni 136, 10623 Berlin, Germany}

\eMail[1]{sdrapeau@saif.sjtu.edu.cn}
\eMail[2]{mainberg@math.tu-berlin.de}

\thanksColleagues{The authors would like to thank Cosima Schroller, Michael Kupper and Mete Soner as well as one anonymous referee for helpful comments and numerous fruitful discussions.}

\myThanks[t1]{Financial support from the National Science Foundation of China, ``Research Fund for International Young Scientists'', Grant number 11550110184, is gratefully acknowledged.}


\ArXiV{1503.00240}


\abstract{ 
We show stability and locality of the minimal supersolution of a forward backward stochastic differential equation with respect to the underlying forward process under weak assumptions on the generator.
The forward process appears both in the generator and the terminal condition.
Painlev\'e-Kuratowski and Convex Epi-convergence are used to establish the stability.
For Markovian forward processes the minimal supersolution is shown to have the Markov property.
Furthermore, it is related to a time-shifted problem and identified as the unique minimal viscosity supersolution of a corresponding PDE.
}
\keyWords{Supersolutions of Backward Stochastic Differential Equations; Stability; FBSDEs; Markov Property; Viscosity Supersolutions}
\keyAMSClassification{60H10, 60H30, 35D40}

\maketitle

\section{Introduction}
\addcontentsline{toc}{section}{Introduction}
\markboth{\uppercase{Introduction}}{\uppercase{Introduction}}

In this work we study \emph{forward backward minimal supersolutions}, particularly their stability and locality with respect to the forward process.
For the special case of Markovian forward processes, we thereby provide the Markov property of the minimal supersolution and show how the latter is related to viscosity supersolutions of a corresponding PDE.
More precisely, given a fixed time horizon, $T>0$, measurable functions $g$ and $\varphi$, a filtered probability space, the filtration of which is generated by a $d$-dimensional Brownian motion, and a progressive $d$-dimensional forward process $X$, we study the minimal supersolution of the decoupled \emph{forward backward stochastic differential equation} (FBSDE)
\begin{equation}\label{eq_intro01}
    Y_s - \int_s^t g_u(X_u,Y_u,Z_u)du + \int_s^t Z_u dW_u \ge Y_t \quad\text{and}\quad Y_T \geq \varphi(X_T), \tag{$\ast$}
\end{equation}
where $0\le s\le t\le T$.
Throughout we work with a \emph{standard generator} $g$, that is a positive, lower semicontinous function which is convex in the control variable $z$ and which in addition is either monotone in $y$ or jointly convex in $(y,z)$. 
The expression ``standard'' is justified since the former are, to the best of our knowledge, the mildest assumptions guaranteeing existence and uniqueness of the minimal supersolution $(\mathcal E(X),Z)$ of \eqref{eq_intro01}, compare \citet{CSTP}.

The first novel and main contribution of this paper consists in proving stability of the minimal supersolution as a function of $X$ by combining existing stability results of \citet{CSTP} and \citet{henner2013} with Painlev\'e-Kuratowski and Convex epigraphical convergence.
This kind of stability generalizes results obtained so far in this direction in that the forward process now affects jointly both the dynamics of the problem through its input on $g$ and the terminal condition.
It comes at a cost in terms of assumptions on the generator, namely at the need of $g$ satisfying not only a point-wise but also an epigraphical lower semi-continuity condition \ref{rec}.
However, we show that this epigraphical lower semi-continuity condition is met in a significant number of situations using some results about horizon functions, compare \citet{rockafellar02}, and Paintlev\'e-Kuratovsky/Convex epigraphical convergence in \citet{aubin2009,loehne2006}.
Furthermore, we prove that the minimal supersolution is local in the following sense: Given a time $t\in[0,T]$ and a set $A\in\mathcal F_t$ it holds $\mathcal{E}_s(X)=1_{A}\mathcal{E}_s(X^1)+1_{A^c}\mathcal{E}_s(X^2)$ for $s\in[t,T]$ where $X^1$ and $X^2$ are two forward processes and $X$ their concatenation.
Specifically, this allows to restrict our focus to supersolutions on $[t,T]$ and forget about the past once we have arrived at time $t$.

Both the results above open the door to the study of supersolutions of Markovian FBSDEs and of their relation to PDE theory, the second part of this work. 
Supposing $X$ to be the solution to a classical SDE we study under which conditions $\mathcal E$ is also Markovian in the sense of it being a function of time and the underlying forward process.
To this end, we shift the original problem \eqref{eq_intro01} in time and introduce the candidate function $u(t,x)$, the value at time zero of the minimal supersolution corresponding to the shifted formulation with a forward process starting in $x\in\R^d$.
Besides proving that $x\mapsto u(t,x)$ maintains central features such as lower semicontinuity, we show that $\mathcal E_t(X^{t,x})=u(t,x)$ where $X^{t,x}$ is the forward diffusion starting in $x$ at time $t$, therewith establishing the connection between the original and the time-shifted problem.
Furthermore, using $X=X^{t,X_t}$ and approximating $X_t$ from below by step functions, we obtain that $\mathcal E_t(X)\geq u(t,X_t)$ always holds true, with equality if $x\mapsto u(t,x)$ is monotone or continuous.

For $\varphi$ bounded from below and $g$ jointly convex in $(x,y,z)$  
another ansatz to obtain the desired representation $\mathcal E_t(X)=u(t,X_t)$ is to draw on both the convexity of the generator
and the relation of Lipschitz BSDEs and PDEs as for instance given in \citet{karoui01}.
The former allows to approximate $g$ from below by a sequence of Lipschitz generators for which the minimal supersolution coincides with the unique solution of the BSDE, a method first used in \citet{DrapeauTangpi}.
The latter in turn then ensures that at each approximation step there is a a one-to-one relation between the (super-)solution and a viscosity solution of the corresponding PDE.
Stability of the problem with respect to the generator, compare \citet{CSTP}, finally allows us to pass to the limit and thereby identify $u$ as the unique minimal lower semi-continuous viscosity supersolution of the above PDE.
This extends existing results on the connection of BSDEs and PDEs to minimal supersolutions and constitutes the third contribution of this work.

Let us briefly discuss the existing literature on related problems.
Nonlinear BSDEs were first introduced in \citet{peng01}, whereas their relation to PDEs was extensively
studied among other in \citet{PadouxPeng1992} and \citet{Peng1992}.
As BSDEs may be ill-posed beyond the quadratic case, compare \citet{DelbaenHu}, minimal supersolutions extend the concept of solutions and were first rigorously studied in \citet{CSTP} and then subsequently in \citet{HKM}, while \citet{DrapeauTangpi} derived their dual representation.
In order to keep the presentation neat, we refer the reader to aforementioned works and \citet{karoui01} for a broader discussion on the subject.


The remainder of this paper is organized as follows. 
Setting and notations are specified in Section \ref{sec2}, while the central results on stability and locality are given in Section \ref{sec3}.
Subsequently, Section \ref{sec4} covers the study of the Markovian case,
whereas the relation between forward backward minimal supersolutions and viscosity supersolutions of PDEs is provided in Section \ref{sec5}.
Technical results on epi-convergenge and Painlev\'e-Kuratowski limits are presented in the appendix.


\section{Setting and notation}\label{sec2}

We consider the canonical probability space $(\Omega,\mathcal F)=(C_0([0,T],\R^d),\mathcal B(C_0([0,T],\R^d)))$.
By $W$ we denote the canonical process, $P$ the Wiener measure and $(\mathcal F_t)$ the filtration generated by $W$ augmented by the $P$-null sets of $W$.
For some fixed time horizon $T>0$ the set of $\mathcal F_T$-measurable random variables is denoted by $L^0$, where random variables are identified in the $P$-almost sure sense.
Let furthermore denote $L^p$ the set of random variables in $L^0$ with finite $p$-norm, for $p \in [ 1,+\infty]$.
Inequalities and strict inequalities between any two random variables or processes $X^1,X^2$ are understood in the $P$-almost sure or in the $P\otimes dt$-almost everywhere sense, respectively.
We denote by $\mathcal{S}$ the set of \cadlag\, progressively measurable processes $Y$ with values in $\R$.
We further denote by $\mathcal{L}$ the set of $\R^{d}$-valued, progressively measurable processes $Z$ such that $\int_0^T Z_s^2 ds <\infty$ $P$-almost surely.
For $Z\in \mathcal L$, the stochastic integral $\int ZdW$ is well defined and is a continuous local martingale.

We define the concatenation of $\bar\omega, \omega\in \Omega$ at time $t\in[0,T]$ by
\begin{equation}\label{eq_omega_concatenation}
    (\bar\omega\otimes_t \omega)_u := \bar\omega_u 1_{[0,t)}(u) + 
    \big(\bar\omega_{t} + \omega_{u}-\omega_{t}\big) 1_{[t, T]}(u),\quad u \in [0,T].
\end{equation}

Given an extended real valued function $(x,y,z)\mapsto g(x,y,z)$ defined on a finite dimensional space, we denote $\text{dom}g=\{(x,y,z)\colon g(x,y,z)<\infty\}$ and, by a slight abuse of notation, we say that $x \in \text{dom}g$ if $g(x,y,z)<\infty$ for some $y,z$.
Further, for a sequence $(x_n)\subseteq \mathbb{R}^d$ we denote by $\text{cl}\{g(x_n,\cdot,\cdot)\colon n\}$  the greatest lower semi-continuous function $(y,z)\mapsto h(y,z)$ such that $h\leq g(x_n,\cdot,\cdot)$ for every $n$, while $\text{clco}\{g(x_n,\cdot,\cdot)\colon n\}$ or $\text{clco}_z \{g(x_n,\cdot,\cdot)\colon n\}$ is defined likewise with the addition of being jointly convex or convex in $z$, respectively.
This given, we define the Painlev\'e-Kuratowski and Closed-Convex limit inferior as follows, see Appendix \ref{appendix01},
\begin{equation}\label{app_defn_limit}
    \begin{split}
        \mathfrak{e}\text{-}\liminf g(x_n,\cdot,\cdot)&:= \sup_n \text{cl}\{g(x_k,\cdot,\cdot)\colon k\geq n\}\\
        \mathfrak{c}\text{-}\liminf g(x_n,\cdot,\cdot)&:=\sup_n \text{clco}\{g(x_k,\cdot,\cdot)\colon k\geq n\}\\
        \mathfrak{c}_z\text{-}\liminf g(x_n,\cdot,\cdot)&:=\sup_n \text{clco}_z\{g(x_k,\cdot,\cdot)\colon k\geq n\}.
    \end{split}
\end{equation}

Finally, for a lower semi-continuous proper convex function $h$, we denote by $h^\infty$ the \emph{horizon function} of $h$, that is,
\begin{equation*}
    h^\infty(y)=\lim_{\alpha \to \infty } \frac{h(x+\alpha y)-h(x)}{\alpha}, 
\end{equation*}
where $x\in \text{dom}f$, \citep[See][Definition 3.17 and Theorem 3.21]{rockafellar02}.


\section{Forward backward minimal supersolutions}\label{sec3}
Throughout we call a jointly measurable function $g\colon [0,T] \times \R^d \times \R\times \R^{d} \to [-\infty, \infty]$ a \emph{generator}.
Given a generator $g$, a progressive $d$-dimensional measurable process $X$ and a measurable function $\varphi :\R^d\to \mathbb{R}$ we call a pair $( Y, Z)\in \mathcal{S}\times\mathcal{L}$ a \emph{supersolution of the decoupled forward backward stochastic differential equation}\footnote{To keep the presentation lean we sometimes use the abbreviated expression \emph{forward backward supersolutions.}} if
\begin{equation}
     Y_s - \int_s^t g_u(X_u,Y_u,Z_u)du + \int_s^t Z_u dW_u \geq Y_t \quad \text{and}\quad Y_T \geq \varphi(X_T)
     \label{eq_original_tarpo}
\end{equation}
for every $0\leq s\leq t\leq T$.
We call $X$ the \emph{forward process}, $Y$ the \emph{value process} and $Z$ its corresponding \emph{control process}.
A control process $Z \in \mathcal{L}$ is said to be \emph{admissible} if the continuous local martingale $\int Z dW$ is a supermartingale and we denote the set collecting all supersolutions by
\begin{equation*}
    \mathcal{A}(X):=\crl{(Y,Z) \in \mathcal{S}\times \mathcal L \colon Z\text{ is admissible and \eqref{eq_original_tarpo} holds}}.
\end{equation*}
In general supersolutions are not unique, therefore we define a supersolution $(Y,Z)\in \mathcal{A}(X)$ to be \emph{minimal} if $Y\leq \hat{Y}$ for every $(\hat{Y},\hat{Z})\in \mathcal{A}(X)$.
If a minimal supersolution exists, we denote its value process by $\mathcal{E}(X)$.
If further, $\mathcal{A}(X)\equiv\emptyset$, we set $\mathcal{E}(X)=\infty$ by convention.

Throughout this paper a generator may satisfy
\begin{enumerate}[label=\textsc{(std)},leftmargin=40pt]
    \item  $g$ is positive, lower semicontinuous and $z \mapsto g(x,y,z)$ is convex.\label{std}
\end{enumerate}
\begin{enumerate}[label=\textsc{(mon)},leftmargin=40pt]
    \item $y \mapsto g(x,y,z)$ is monotone.\footnote{That is either increasing or decreasing.}\label{mon}
\end{enumerate}
\begin{enumerate}[label=\textsc{(con)},leftmargin=40pt]
    \item $(y,z)\mapsto g(x,y,z)$ is jointly convex.\label{con1}
\end{enumerate}
\begin{definition}
    We say that $g$ is a \emph{standard generator} if $g$ satisfies \ref{std} and either \ref{mon} or \ref{con1}.
\end{definition}
\begin{remark}
    Following \citep[Section 4.3]{CSTP}, the positivity assumption in \ref{std} may be relaxed to $g$ being bounded from below by an affine function of $z$ without violating the validity of our results.
\end{remark}
The following is a straightforward application of results in \citep{CSTP,DrapeauTangpi}.
\begin{theorem}\label{thm:existunique}
    Let $g$ be a standard generator.
    Suppose that $\varphi(X_T)^- \in L^1$ and $\mathcal{A}(X)$ is non-empty.
    Then there exists a unique minimal supersolution $(\mathcal{E}(X),Z) \in \mathcal{A}(X)$ for which holds
    \begin{equation*}
        \mathcal{E}_t(X)=\essinf \left\{ Y_t \colon (Y,Z)\in \mathcal{A}(X) \right\}
    \end{equation*}
    almost surely for every $0\leq t\leq T$.
\end{theorem}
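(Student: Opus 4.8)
The plan is to establish the three assertions---existence of a minimal supersolution, its uniqueness, and the essential-infimum representation---by reducing to the already-developed theory of minimal supersolutions of BSDEs in \citet{CSTP,DrapeauTangpi}, treating the decoupled forward component as fixed data. The key observation is that, once the forward process $X$ is given, the map $(u,\omega,y,z)\mapsto \tilde g_u(\omega,y,z):=g_u(X_u(\omega),y,z)$ is itself a generator in the sense of \citet{CSTP}, and the terminal condition $\xi:=\varphi(X_T)$ is a fixed $\mathcal F_T$-measurable random variable; hence \eqref{eq_original_tarpo} is exactly a supersolution of a BSDE with generator $\tilde g$ and terminal datum $\xi$. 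So the whole proof is a verification that $\tilde g$ inherits the structural hypotheses needed to invoke the cited existence/uniqueness theorems, followed by a transcription of their conclusions.

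\textbf{Step 1: $\tilde g$ is an admissible generator.} I would check joint measurability of $\tilde g$ (composition of the jointly measurable $g$ with the progressively measurable $X$, which is routine), positivity (immediate from positivity of $g$ in \ref{std}), lower semicontinuity in $(y,z)$ for fixed $(u,\omega)$ (inherited pointwise from \ref{std}), and convexity of $z\mapsto\tilde g_u(\omega,y,z)$ (inherited from \ref{std}). Likewise \ref{mon}/\ref{con1} for $g$ transfer verbatim to $\tilde g$, since they are conditions in the $(y,z)$-variables only with $x$ frozen. Thus $\tilde g$ is a standard generator for the plain BSDE problem in the terminology of \citet{CSTP}. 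Together with the hypothesis $\xi^-=\varphi(X_T)^-\in L^1$ and the standing assumption $\mathcal A(X)\neq\emptyset$ (which is literally the nonemptiness of the admissible-supersolution set for the pair $(\tilde g,\xi)$), all structural inputs of the cited existence theorem are met.

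\textbf{Step 2: invoke existence and the essinf formula.} Under \ref{std} together with \ref{con1}, existence of a minimal supersolution and the representation $\mathcal E_t(X)=\essinf\{Y_t:(Y,Z)\in\mathcal A(X)\}$ follow from \citet[Theorem~4.17 / Section~4]{CSTP}: the essential infimum $\bar Y_t$ of the value processes is shown there to be the value process of an admissible supersolution via a Fatou-type downward-lattice argument (the family $\{Y:(Y,Z)\in\mathcal A(X)\}$ is closed under pairwise minimization, a regularized limit of a minimizing sequence lies in $\mathcal A(X)$, and its control is admissible because a limit of supermartingale stochastic integrals stays a supermartingale). For the case \ref{std}$+$\ref{mon} one instead appeals to the monotone-generator existence result; \citet{DrapeauTangpi} is cited for the dual representation that underpins the monotone case. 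In either case the conclusion is the existence of $(\mathcal E(X),Z)\in\mathcal A(X)$ with $\mathcal E_t(X)=\essinf\{Y_t:(Y,Z)\in\mathcal A(X)\}$ for each $t$.

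\textbf{Step 3: uniqueness.} Minimality of the value process $\mathcal E(X)$ is immediate from the essinf characterization, since any two minimal value processes each dominate and are dominated by the other, hence agree $P$-a.s.\ for every $t$ and, being \cadlag, are indistinguishable. Uniqueness of the control $Z$ then follows from the supersolution inequality together with admissibility: on $[0,T]$ one has $\mathcal E_0(X)-\int_0^t \tilde g_u(\mathcal E_u(X),Z_u)\,du+\int_0^t Z_u\,dW_u\ge \mathcal E_t(X)$, and a standard argument (the difference of two such controls yields a continuous local martingale that is both a supermartingale and, after sign change, dominated, forcing it to be constant) identifies $Z$ up to $P\otimes dt$-null sets; I would simply cite the uniqueness clause of \citet[Theorem~4.17]{CSTP} rather than reproduce it. I expect the only mild subtlety---and the place a careful reader should pause---to be Step~1's measurability check and the verification that freezing $x$ at the random value $X_u(\omega)$ genuinely preserves the hypotheses \emph{jointly} in $(u,\omega)$ (lower semicontinuity is a pointwise-in-$(u,\omega)$ property, so no uniformity is needed); everything past that is a citation.
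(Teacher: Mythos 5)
Your proposal is correct and follows exactly the paper's own argument: freeze the forward process to obtain the generator $g^X(y,z):=g(X,y,z)$ and terminal condition $\xi=\varphi(X_T)$, verify the structural hypotheses transfer, and invoke the existence and uniqueness results of \citet{CSTP,DrapeauTangpi}. The paper states this reduction in a single sentence, while you spell out the (routine) verification steps; there is no substantive difference.
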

\begin{proof}
    For a given $X \in \mathcal{S}$, setting $g^X(y,z):=g(X,y,z)$ and $\xi =\varphi(X_T)$ defines a generator and a terminal condition satisfying the existence and uniqueness assumptions in \citep{CSTP,DrapeauTangpi}, hence the assertion.
\end{proof}
Denoting by $\mathcal{A}(\xi, h)$ and $\mathcal{E}(\xi,h)$ the set of supersolutions and the minimal supersolution, respectively, with terminal condition $\xi$ and generator $h(y,z)$ in the sense of \citep{CSTP,DrapeauTangpi}, it holds $\mathcal{E}(X)=\mathcal{E}(\varphi(X_T),h)$ where $h=g(X,\cdot,\cdot)$.

The subsequent results of Sections \ref{sec4}  and \ref{sec5} depend on the stability of the minimal supersolution as a function of $X$,
provided in Theorem \ref{prop:stability} below.
Together with the subsequent Proposition \ref{prop:lsc}, it constitutes the first main contribution of this work, generalizes the stability results given in \citep{CSTP} and is partially inspired by driver stability shown in \citep{henner2013}.
However, by dependence of the generator on the forward component we obtain a joint stability in the driver and terminal condition.
This requires a novel approach and one further assumption on the generator.
\begin{enumerate}[label=\textsc{(rec)},leftmargin=40pt]
    \item \label{rec} for every bounded sequence $(x_n)$ such that $x_n\to x$, it holds
        \begin{itemize}
            \item if $g$ satisfies \ref{con1}, then $g(x,\cdot,\cdot)\leq \mathfrak{c}\text{-}\liminf g(x_n,\cdot,\cdot)$;
            \item if $g$ satisfies \ref{mon}, then $g(x,\cdot,\cdot)\leq \mathfrak{c}_{z}\text{-}\liminf g(x_n,\cdot,\cdot)$.
        \end{itemize}
\end{enumerate}
\begin{theorem}\label{prop:stability}
    Let $g$ be a standard generator satisfying \ref{rec} and suppose that $\varphi$ is lower semicontinuous.
    Let $(X^n)$ be a sequence of progressive measurable processes such that $X^n_t\to X_t$ almost surely for every $t$ and $\varphi(X^n_T) \geq -\eta$ where $\eta \in L^1_+$.
    Then it holds
    \begin{equation}\label{thm_stab1}
        \mathcal{E}_0(X)\leq \liminf \mathcal{E}_0(X^n).
    \end{equation}
    If furthermore $x \mapsto g(x,\cdot,\cdot)$, $\varphi$ and $(X^n)$ are increasing, then
    \begin{equation}\label{thm_stab2}
        \mathcal{E}_0(X)=\lim \mathcal{E}_0(X^n).
    \end{equation}
    Finally, if $\liminf \mathcal{E}_0(X^n)<\infty$, then $\mathcal{E}_t(X)\leq \liminf \mathcal{E}_t(X^n)$ for every $t$.
\end{theorem}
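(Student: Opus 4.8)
The plan is to approximate the data $(g(X,\cdot,\cdot),\varphi(X_T))$ of the problem from below by an increasing sequence built from the tails of $(X^n)$, and then to invoke the monotone stability results of \citep{CSTP,henner2013}. We may assume $\liminf\mathcal E_0(X^n)<\infty$ (otherwise \eqref{thm_stab1} is trivial and the last assertion is void); since this is exactly the standing hypothesis of the last assertion, I would treat the three claims simultaneously, \eqref{thm_stab1} being the $t=0$ instance of one single inequality. The first step is to introduce, for each $n$ and pathwise in $(t,\omega)$, the generator $h^n_t(\omega,\cdot,\cdot):=\text{clco}\{g(X^k_t(\omega),\cdot,\cdot)\colon k\geq n\}$ when $g$ satisfies \ref{con1} and $h^n_t(\omega,\cdot,\cdot):=\text{clco}_z\{g(X^k_t(\omega),\cdot,\cdot)\colon k\geq n\}$ when $g$ satisfies \ref{mon}, together with $\xi^n:=\inf_{k\geq n}\varphi(X^k_T)$. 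I would then check that each $h^n$ is again a standard generator — positivity, lower semicontinuity and convexity in $z$ (resp.\ joint convexity) pass to these closures, monotonicity in $y$ under \ref{mon} survives both the lower semicontinuous envelope and the convexification in $z$, and joint measurability in $(t,\omega)$ holds since only countably many maps enter — and that $(\xi^n)^-\leq\eta\in L^1$. Both $(h^n)$ and $(\xi^n)$ increase in $n$.

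Next, since by construction $h^n\leq g(X^k,\cdot,\cdot)$ and $\xi^n\leq\varphi(X^k_T)$ for every $k\geq n$, and shrinking a nonnegative generator only weakens the supersolution inequality, one gets $\mathcal A(X^k)\subseteq\mathcal A(\xi^n,h^n)$ for $k\geq n$. Because $\liminf\mathcal E_0(X^n)<\infty$, infinitely many $\mathcal A(X^k)$ are nonempty, hence $\mathcal A(\xi^n,h^n)\neq\emptyset$ for every $n$; Theorem~\ref{thm:existunique} then applies to $(\xi^n,h^n)$ and its $\essinf$ characterisation gives $\mathcal E_t(\xi^n,h^n)\leq\mathcal E_t(X^k)$ a.s.\ for every $t$ and $k\geq n$, whence, intersecting over the countably many such $k$, $\mathcal E_t(\xi^n,h^n)\leq\inf_{k\geq n}\mathcal E_t(X^k)$ a.s. I would then pass to the limit: put $h^\infty:=\sup_n h^n$ and $\xi^\infty:=\sup_n\xi^n=\liminf_k\varphi(X^k_T)$, again a standard generator and a terminal condition with $(\xi^\infty)^-\leq\eta$. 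For each $t$ and $P$-a.e.\ $\omega$ the sequence $(X^k_t(\omega))_k$ converges to $X_t(\omega)\in\R^d$, hence is bounded, so \ref{rec} applies pathwise and yields $g(X_t(\omega),\cdot,\cdot)\leq\mathfrak{c}\text{-}\liminf_k g(X^k_t(\omega),\cdot,\cdot)=h^\infty_t(\omega,\cdot,\cdot)$ (resp.\ with $\mathfrak{c}_z$), so that $g(X,\cdot,\cdot)\leq h^\infty$ $P\otimes dt$-a.e.; lower semicontinuity of $\varphi$ gives $\varphi(X_T)\leq\xi^\infty$. Since $h^n\uparrow h^\infty$, $\xi^n\uparrow\xi^\infty$, $\mathcal A(\xi^n,h^n)\neq\emptyset$ and $\lim_n\mathcal E_0(\xi^n,h^n)\leq\lim_n\inf_{k\geq n}\mathcal E_0(X^k)=\liminf_n\mathcal E_0(X^n)<\infty$, the monotone stability of \citep{CSTP,henner2013} should provide $\mathcal A(\xi^\infty,h^\infty)\neq\emptyset$ and $\mathcal E_t(\xi^n,h^n)\uparrow\mathcal E_t(\xi^\infty,h^\infty)$ a.s.\ for every $t$.

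The last step is to observe that $h^\infty\geq g(X,\cdot,\cdot)$ and $\xi^\infty\geq\varphi(X_T)$, so enlarging the generator only tightens the supersolution inequality and $\mathcal A(\xi^\infty,h^\infty)\subseteq\mathcal A(X)$; hence $\mathcal A(X)\neq\emptyset$ and, again by Theorem~\ref{thm:existunique}, $\mathcal E_t(X)\leq\mathcal E_t(\xi^\infty,h^\infty)$ a.s. Chaining the estimates,
\[
    \mathcal E_t(X)\leq\mathcal E_t(\xi^\infty,h^\infty)=\sup_n\mathcal E_t(\xi^n,h^n)\leq\sup_n\inf_{k\geq n}\mathcal E_t(X^k)=\liminf_k\mathcal E_t(X^k)\quad\text{a.s.},
\]
which for $t=0$ is \eqref{thm_stab1} and for general $t$ is the last assertion. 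For \eqref{thm_stab2}, if $x\mapsto g(x,\cdot,\cdot)$, $\varphi$ and $(X^n)$ are increasing, then $g(X^n,\cdot,\cdot)\leq g(X,\cdot,\cdot)$ and $\varphi(X^n_T)\leq\varphi(X_T)$, so $\mathcal A(X)\subseteq\mathcal A(X^n)$ and $\mathcal E_0(X^n)\leq\mathcal E_0(X)$ for every $n$; together with \eqref{thm_stab1} this squeezes $\lim_n\mathcal E_0(X^n)=\mathcal E_0(X)$.

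The two technical burdens I anticipate are showing that $h^n$ genuinely is a standard generator — preservation of joint measurability and, under \ref{mon}, of monotonicity in $y$ under the closure and convexification operations — and pinning down the precise form of monotone stability required, namely existence of the limiting minimal supersolution together with convergence $\mathcal E_t(\xi^n,h^n)\uparrow\mathcal E_t(\xi^\infty,h^\infty)$ at \emph{every} time $t$, which is exactly where the finiteness of $\liminf\mathcal E_0(X^n)$ is used. The conceptual crux, however, will be the pathwise application of \ref{rec}: it is what turns the mere almost sure convergence $X^n_t\to X_t$ into an automatically bounded sequence to which the epigraphical recession condition applies, producing the decisive bound $g(X,\cdot,\cdot)\leq h^\infty$ that makes the limiting problem dominate the original one.
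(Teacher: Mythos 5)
Your construction coincides with the paper's: the tail envelopes $h^n=\text{clco}\{g(X^k,\cdot,\cdot)\colon k\geq n\}$ (resp.\ $\text{clco}_z$ under \ref{mon}), the terminal conditions $\xi^n=\inf_{k\geq n}\varphi(X^k_T)$, the sandwich $h^n\leq g(X^k,\cdot,\cdot)$ and $\xi^n\leq\varphi(X^k_T)$ on one side versus $g(X,\cdot,\cdot)\leq h$ via \ref{rec} and $\varphi(X_T)\leq\xi$ via lower semicontinuity on the other, and the final chain of inequalities are exactly those of the paper. Your squeeze argument for \eqref{thm_stab2} via $\mathcal{A}(X)\subseteq\mathcal{A}(X^n)$ is a slightly more direct variant of the paper's observation that $h^n=g(X^n,\cdot,\cdot)$ and $\xi^n=\varphi(X^n_T)$ in the monotone case, and is fine.

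The one genuine gap is the step you outsource with ``the monotone stability of \citep{CSTP,henner2013} should provide $\mathcal{E}_t(\xi^n,h^n)\uparrow\mathcal{E}_t(\xi,h)$.'' Stability with the generator and the terminal condition increasing \emph{simultaneously} is not available off the shelf in those references (which treat the two separately), and proving it is the bulk of the paper's argument. Concretely, one must take $Y=\lim Y^n$ for the increasing sequence of minimal supersolutions, extract a candidate control $Z$ as the $P\otimes dt$-limit of a sequence $(\tilde Z^n)$ in the asymptotic convex hull of $(Z^n)$ with $\int\tilde Z^n dW\to\int Z\,dW$ locally in $L^1$, and then verify $(Y,Z)\in\mathcal{A}(\xi,h)$ by applying Fatou's lemma to $h^k(Y,Z)\to h(Y,Z)$ and pushing the convex combination through $h^k$ --- which requires a case distinction according to whether $y\mapsto g$ is decreasing, increasing (where one uses $Y^n\leq Y^i$ for $i\geq n$ together with $Y^n\to Y$), or $(y,z)\mapsto g$ is jointly convex --- before replacing $h^k(Y^i,Z^i)$ by $h^i(Y^i,Z^i)$ for $i\geq k$ and recovering the supersolution inequality. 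You correctly flag this as the place where finiteness of $\liminf\mathcal{E}_0(X^n)$ enters, but without carrying out this verification the central convergence remains an assertion rather than a proof; the remainder of your write-up is correct as it stands.
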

\begin{proof}
    We define\footnote{The following operations are to be understood $(t,\omega)$-wise.}
    \begin{itemize}
        \item if $g$ satisfies \ref{mon}: $h^n :=\text{clco}_z\{g(X^k,\cdot,\cdot);k\geq n\}$ for which holds that $h^n$ is positive, lower semicontinuous, monotone in $y$ and convex in $z$.
            Furthermore, it holds $h^n\leq h^{n+1}$ and $h^n\to h=\mathfrak{c}_z\text{-}\liminf g(X^n,\cdot,\cdot)$ by definition of $\mathfrak{c}_z\text{-}\liminf$ in \eqref{app_defn_limit}.
        \item if $g$ satisfies \ref{con1}: $h^n :=\text{clco}\{g(X^k,\cdot,\cdot);k\geq n\}$ for which holds that $h^n$ is positive, lower semicontinuous, and jointly convex in $(y,z)$.
            Furthermore, it holds $h^n\leq h^{n+1}$ and $h^n\to h=\mathfrak{c}\text{-}\liminf g(X^n,\cdot,\cdot)$ by definition of $\mathfrak{c}\text{-}\liminf$ in \eqref{app_defn_limit}.
    \end{itemize}
    Define in addition the increasing sequence of terminal conditions $\xi^n=\inf_{k\geq n} \varphi(X^k_T)$ for which holds $\xi^n\geq -\eta$ for every $n$ and $\xi:=\sup \xi^n$. 

    Given the sequences of terminal conditions $(\xi^n)$ and generators $(h^n)$, both increasing, we adapt the stability proofs in \citep{CSTP} as follows.
    The monotonicity of the minimal supersolution operator implies $\mathcal{E}_0(\xi^1, h^1)\leq \ldots \leq \mathcal{E}_0(\xi^n, h^n)\leq \ldots \leq \mathcal{E}_0(\xi, h)$.
    Let us show that $\lim \mathcal{E}_0(\xi^n, h^n)=\mathcal{E}_0(\xi, h)$.
    If $\lim \mathcal{E}_0(\xi^n,h^n)= \infty$, there is nothing to prove.
    Assuming therefore that $\lim \mathcal{E}_0(\xi^n, h^n)<\infty$ yields the existence of 
    a non-trivial minimal supersolution for every $n$ .
    Denote by $((Y^n,Z^n))$ this sequence of minimal supersolutions and define $Y=\lim Y^n$ since $(Y^n)$ is increasing.
    The same argumentation as in \citep{CSTP} implies $Y$ being a \cadlag\, supermartingale and the existence of $Z \in \mathcal{L}$ together with a sequence $(\tilde{Z}^n)$ in the asymptotic convex hull of $(Z^n)$ such that $\tilde{Z}^n\to Z$ $P\otimes dt$-almost surely, while $\int \tilde{Z}^n dW\to \int ZdW$ locally in $L^1$.
    Further, $\int Z dW$ is a admissible.
    We are left to show that $(Y,Z)$ is a minimal supersolution for $\xi$.
    Since $h^k(Y,Z)\to h(Y,Z)$ $P\otimes dt$-almost surely, Fatou's Lemma yields
    \begin{equation}\label{eq:temp01}
        Y_s-\int_s^th_u(Y_u,Z_u)du+\int_s^t Z_u dW_u\geq \limsup_k \left(Y_s-\int_s^th_u^k(Y_u,Z_u)du+\int_s^t Z_u dW_u\right).
    \end{equation}
    For $k$ fixed, the following holds:
    \begin{itemize}
        \item If $y\mapsto g(x,y,z)$ is decreasing:
            Lower semicontinuity, convexity in $z$, and $h^k$ being decreasing in $y$ yield
            \begin{multline*}
                Y_s-\int_{s}^t h^k_u(Y_u,Z_u)du +\int_s^t Z_udW_u\geq \limsup_{n}\sum_{i=n}^{m_n} \alpha_i^n \left(Y^i_s-\int_{s}^{t} h^k_u\left(Y_u, Z^i_u \right)du +\int_{s}^{t} Z^i_u dW_u\right)\\
                \geq \limsup_n\sum_{i=n}^{m_n} \alpha_i^n \left(Y^i_s-\int_{s}^{t} h^k_u\left(Y_u^i, Z^i_u \right)du +\int_{s}^{t} Z^i_u dW_u\right).
            \end{multline*}
        \item If $y\mapsto g(x,y,z)$ is increasing: 
            Lower semicontinuity, convexity in $z$, the fact that $Y^n \to Y$ $P\otimes dt$-almost everywhere, the function $h^k$ being increasing in $y$, and $Y^n\leq Y^i$ for every $i=n,\ldots, m_n$, yield
            \begin{multline*}
                Y_s-\int_{s}^t h^k_u(Y_u,Z_u)du +\int_s^t Z_udW_u\geq \limsup_{n}\sum_{i=n}^{m_n} \alpha_i^n \left(Y^i_s-\int_{s}^{t} h^k_u\left(Y_u^n, Z^i_u \right)du +\int_{s}^{t} Z^i_u dW_u\right)\\
                \geq \limsup_n\sum_{i=n}^{m_n} \alpha_i^n \left(Y^i_s-\int_{s}^{t} h^k_u\left(Y_u^i, Z^i_u \right)du +\int_{s}^{t} Z^i_u dW_u\right).
            \end{multline*}
        \item If $(y,z)\mapsto g(x,y,z)$ is jointly convex: thereby $h^k$ is jointly convex too.
            Lower semicontinuity and joint convexity of $h^k$ yield
            \begin{multline*}
                Y_s-\int_{s}^t h^k_u(Y_u,Z_u)du +\int_s^t Z_udW_u\geq \limsup_n\sum_{i=n}^{m_n} \alpha_i^n \left(Y^i_s-\int_{s}^{t} h^k_u\left(Y_u^i, Z^i_u \right)du +\int_{s}^{t} Z^i_u dW_u\right).
            \end{multline*}
    \end{itemize}
    In all cases above, for every $n$ greater than $k$, it follows that $h^k(Y_u^i, Z^i_u)\leq h^i(Y_u^i, Z^i_u)$ for every $i=n,\ldots, m_n$.
    Hence
    \begin{multline*}
        \limsup_n\sum_{i=n}^{m_n} \alpha_i^n \left(Y^i_s-\int_{s}^{t} h^k_u\left(Y_u^i, Z^i_u \right)du +\int_{s}^{t} Z^i_u dW_u\right)\\
        \geq \limsup_n\sum_{i=n}^{m_n} \alpha_i^n \left(Y^i_s-\int_{s}^{t} h^i_u\left(Y_u^i, Z^i_u \right)du +\int_{s}^{t} Z^i_u dW_u\right)\geq\limsup_n\sum_{i=n}^{m_n} \alpha_i^n Y^i_t=Y_t
    \end{multline*}
    which, plugged into equation \eqref{eq:temp01}, yields
    \begin{equation*}
        Y_s-\int_{s}^t h_u(Y_u,Z_u)du +\int_s^t Z_udW_u\geq Y_t.
    \end{equation*}
    As $Y_T =\lim Y^n_T\geq \lim \xi^n=\xi$, this shows that $(Y,Z)\in \mathcal{A}(\xi, h)$.
    Having identified $(Y,Z)$ as a supersolution with terminal condition $\xi$ and driver $h$, this implies $\mathcal{E}_0(\xi, h)\leq Y_0$.
    Since $Y^n\leq \mathcal{E}(\xi,h)$ this completes the proof of $\mathcal{E}_0(\xi,h)=\lim \mathcal{E}_0(\xi^n, h^n)$.
    Particularly, an inspection of the arguments above yields that, whenever $\mathcal{E}_0(\xi,h)<\infty$, then $\mathcal{E}_t(\xi^n,h^n)$ increases monotonically to $\mathcal{E}_t(\xi, h)$ for every $t$.
    With this at hand, the monotone assertion \eqref{thm_stab2} follows readily by observing $h^n=g(X^n, \cdot,\cdot)$ for every $n$ as well as $\xi^n=\varphi(X^n)$.

    As for the first assertion \eqref{thm_stab1}, on the one hand, by definition of $h^n$ and $\xi^n$ for every $n$ it holds $h^n\leq g(X^n,\cdot,\cdot)$ and $\xi^n\leq \varphi(X_T^n)$.
    Hence $\mathcal{E}_0(\xi^n,h^n)\leq \mathcal{E}_0(X^n)$ for every $n$, showing that $\mathcal{E}_0(\xi, h)\leq \liminf \mathcal{E}_0(X^n)$.
    On the other hand, the lower semicontinuity of $\varphi$ implies $\varphi( X_T)\leq\xi $.
    Furthermore, since $g$ satisfies \ref{rec}, it holds $g(X,\cdot,\cdot)\leq h$.
    Combining the above we obtain $\mathcal{E}_0(X)\leq \mathcal{E}_0(\xi,h)\leq \liminf \mathcal{E}_0(X^n)$, thereby finishing the proof.
\end{proof}
As the preceding proof exhibits, the stability depends heavily on the generator $g$ satisfying \ref{rec}.
The following proposition shows that this assumption is indeed fulfilled in many circumstances.
The main part of its proof, being of convex analytical nature, is addressed in Appendix \ref{appendix01}.
\begin{proposition}\label{prop:lsc}
    A standard generator $g$ satisfies the assumption \ref{rec} in any of the following cases:
    \begin{enumerate}[label=\textit{(\roman*)}]
        \item\label{cond1} $g(x,y,z)=g_1(x)+g_2(y,z)$ with $g_1$ lower semi-continuous and $g_2$ a standard generator;
        \item\label{cond2} $g$ satisfies \ref{con1} and $f^\infty=g^\infty(x_n,\cdot,\cdot)$ for every $n$ where $f=\text{clco}\{g(x_n,\cdot,\cdot)\colon n\}$;
        \item\label{cond3} $g$ satisfies \ref{con1} and for every $\gamma$ the level set $\cup_n \{(y,z)\colon g(x_n,y,z)\leq \gamma\}$ is relatively compact;
        \item\label{cond4} $g$ satisfies \ref{mon} and for every $y$ and $\gamma$ the level set $\cup_n\{z\colon g(x_n,y,z)\leq \gamma\}$ is relatively compact.
    \end{enumerate}
    Cases \ref{cond2}--\ref{cond4} have to hold for every $(x_n)\subseteq \text{dom}g$.
\end{proposition}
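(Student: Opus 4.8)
The plan is to reduce \ref{rec} to purely convex-analytic identities for the limits defined in \eqref{app_defn_limit}, to settle the easy case \ref{cond1} by a direct computation, and to defer the genuinely convex-analytic content of cases \ref{cond2}--\ref{cond4} to Appendix \ref{appendix01}. Fix a bounded sequence $(x_n)$ with $x_n\to x$ and write $f_n:=g(x_n,\cdot,\cdot)$ and $f:=g(x,\cdot,\cdot)$; by \ref{std} these functions are positive and, according to whether $g$ satisfies \ref{con1} or \ref{mon}, jointly convex or convex in $z$. Joint lower semicontinuity of $g$ always gives $\mathfrak{e}\text{-}\liminf f_n\ge f$, and since $f$ is itself lower semicontinuous and convex in the relevant sense this propagates to $\text{clco}\{\mathfrak{e}\text{-}\liminf f_n\}\ge f$ (and likewise with $\text{clco}_z$). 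As $\mathfrak{c}\text{-}\liminf f_n\le\text{clco}\{\mathfrak{e}\text{-}\liminf f_n\}$ holds automatically from the definitions in \eqref{app_defn_limit}, the assertion \ref{rec} will follow in each case from the reverse inequality, that is, from the identity
\[
\mathfrak{c}\text{-}\liminf f_n=\text{clco}\{\mathfrak{e}\text{-}\liminf f_n\}
\]
(resp.\ its analogue with $\mathfrak{c}_z\text{-}\liminf$ and $\text{clco}_z$). In epigraphical terms this requires that the closed convex hull of the tail unions $\bigcup_{k\ge n}\text{epi}f_k$ coincide, after intersection over $n$, with the closed convex hull of their Painlev\'e-Kuratowski outer limit $\bigcap_n\overline{\bigcup_{k\ge n}\text{epi}f_k}$.

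In case \ref{cond1} everything is elementary. Since $g_2$ is already lower semicontinuous and convex in the pertinent sense, $\text{clco}\{g(x_k,\cdot,\cdot):k\ge n\}=g_2+\inf_{k\ge n}g_1(x_k)$, and likewise for $\text{clco}_z$: the right-hand side is a lower semicontinuous (jointly, resp.\ $z$-) convex minorant of the family $\{g(x_k,\cdot,\cdot):k\ge n\}$, and any such minorant $h$ satisfies $h\le g_2+g_1(x_k)$ for every $k\ge n$, hence $h\le g_2+\inf_{k\ge n}g_1(x_k)$; here $\inf_{k\ge n}g_1(x_k)$ is bounded below because $g_1$ is lower semicontinuous and $(x_k)$ lies in a compact set. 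Taking the supremum over $n$ yields $\mathfrak{c}\text{-}\liminf g(x_n,\cdot,\cdot)=g_2+\liminf_k g_1(x_k)$ (resp.\ with $\mathfrak{c}_z\text{-}\liminf$), and $\liminf_k g_1(x_k)\ge g_1(x)$ by lower semicontinuity of $g_1$, so $\mathfrak{c}\text{-}\liminf g(x_n,\cdot,\cdot)\ge g(x,\cdot,\cdot)$.

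For \ref{cond2}--\ref{cond4} I would first reduce to $(x_n)\subseteq\text{dom}g$: if only finitely many $x_n$ lie in $\text{dom}g$ then $\mathfrak{c}\text{-}\liminf f_n\equiv+\infty$ and there is nothing to prove, while otherwise the indices with $g(x_k,\cdot,\cdot)\equiv+\infty$ have empty epigraph and affect none of the sets above, so one may assume $(x_n)\subseteq\text{dom}g$ and invoke the relevant hypothesis. In the two monotone cases one then eliminates the $y$-variable by exactly the monotonicity argument used for the first two bullets in the proof of Theorem \ref{prop:stability}, reducing the identity to a statement about convex functions of $z$ alone. What remains is the convex-analytic heart of the matter: that forming the closed convex hull of the tails $\bigcup_{k\ge n}\text{epi}f_k$ commutes, after intersection over $n$, with forming their Painlev\'e-Kuratowski outer limit. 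This commutation fails in general — the closed convex hull of the tails can be strictly larger — precisely because $g$ is only lower semicontinuous, not continuous, in $x$, so that the epigraphs can ``escape to infinity'' when convex combinations are taken; ruling this out is the step I expect to be the main obstacle.

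Each of the hypotheses is tailored to preclude such escape. Under \ref{cond3} (resp.\ \ref{cond4}) the sublevel sets $\bigcup_k\{g(x_k,\cdot,\cdot)\le\gamma\}$ (resp.\ $\bigcup_k\{z\colon g(x_k,y,z)\le\gamma\}$, for each fixed $y$) are uniformly relatively compact, so the part of every $\text{epi}f_k$ below a fixed height stays in one compact set, which is enough to interchange closed convex hull and set limit. Under \ref{cond2}, writing $\ell:=\text{clco}\{g(x_k,\cdot,\cdot):k\}$, the assumed equality $\ell^\infty=g^\infty(x_n,\cdot,\cdot)$ of horizon functions forces all $\text{epi}f_k$ and $\text{epi}\,\ell$ to share the recession cone $\text{epi}\,\ell^\infty$, which pins down the only directions along which escape could occur and again yields the commutation. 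The two resulting ingredients — stability of the closed convex hull under these closed set limits, and the horizon-function identities deducing a common recession cone from \ref{cond2} — are what I would isolate as lemmas and prove in Appendix \ref{appendix01}, relying on the horizon calculus of \citet{rockafellar02} and on the Painlev\'e-Kuratowski and convex epigraphical convergence results of \citet{aubin2009,loehne2006}.
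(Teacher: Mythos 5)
Your proposal is correct and follows essentially the same route as the paper: case \textit{(i)} by the identical direct computation of $\text{clco}\{g(x_k,\cdot,\cdot)\colon k\geq n\}=g_2+\inf_{k\geq n}g_1(x_k)$, and cases \textit{(ii)}--\textit{(iv)} by first noting $f\leq\text{clco}\{\mathfrak{e}\text{-}\liminf f_n\}$ from joint lower semicontinuity and then reducing \ref{rec} to the epigraphical identity $\text{clco}\{\mathfrak{e}\text{-}\liminf f_n\}=\mathfrak{c}\text{-}\liminf f_n$, which the paper's Appendix establishes exactly via the mechanisms you name (common recession cones and \citet{loehne2006} for \textit{(ii)}, Caratheodory plus compactness of the sublevel sets for \textit{(iii)}--\textit{(iv)}). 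The two lemmas you propose to isolate are precisely Propositions \ref{lem:convergence} and \ref{lem:convergence2}.
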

\begin{proof}
    As for \ref{cond1}, due to \ref{con1} we have $\text{clco}\{g(x_k,\cdot,\cdot)\colon k\geq n\}=\inf_{k\geq n}g_1(x_k)+g_2$.
    Hence, since $g_1$ is lower semi-continuous, it holds
    \begin{equation*}
        \mathfrak{c}\text{-}\liminf g(x_n,\cdot,\cdot)=\liminf g_1(x_n)+g_2\geq g_1(x)+g_2=g(x,\cdot,\cdot).
    \end{equation*}
    The same argumentation is valid in the case where \ref{mon} is satisfied by considering the convex hull solely in $z$.
    
    The cases \ref{cond2} and \ref{cond4} are subjects of the Proposition \ref{lem:convergence} and \ref{lem:convergence2} in the Appendix \ref{appendix01}.
    Finally, a slight modification of Proposition \ref{lem:convergence2} in the jointly convex case yields \ref{cond3}.
\end{proof}
\begin{remark}
    Note that assumption \ref{cond2} is satisfied if $g(x,y,z)\geq h(y,z)$ for some lower semi-continuous and convex function such that $h^\infty=g^\infty(x,\cdot,\cdot)$ for every $x$.
    In particular if $h$ is coercive in which case \ref{cond3} also holds.
    Assumption \ref{cond4} is fulfilled if $g(x,y,z)\geq c(y)\abs{z}$ for some $c(y)>0$.
\end{remark}

We conclude this section by a further central property of forward backward minimal supersolutions, namely their locality with respect to the underlying forward process. 
\begin{proposition}\label{prop:locality}
    For $t\in[0,T]$ fixed, let $X^1,X^2$ be two forward processes and $A\in \mathcal{F}_t$.
    Define the forward process $X=X^11_{[0,t[}+(1_{A} X^1 +1_{A^c}X^2)1_{[t,T]}$ and suppose that $\mathcal{A}(X), \mathcal{A}(X^1),\mathcal{A}(X^2)\neq \emptyset$.
    Then it holds
    \begin{equation*}
        \mathcal{E}_s(X)=1_{A}\mathcal{E}_s(X^1)+1_{A^c}\mathcal{E}_s(X^2), \quad t\leq s\leq T.
    \end{equation*}
\end{proposition}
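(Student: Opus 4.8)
The plan is to prove the two inequalities ``$\leq$'' and ``$\geq$'' separately, after reducing everything to supersolutions living on the interval $[t,T]$. The reduction rests on the following \emph{restriction property}, which I would establish first (or quote from \citep{CSTP}): if $(\hat Y,\hat Z)$ is a supersolution of the FBSDE posed on $[t,T]$ with the same generator $g$, forward process $X$, terminal condition $\varphi(X_T)$, and with $\int_t^{\cdot}\hat Z\,dW$ a supermartingale, then $\hat Y_s\geq\mathcal E_s(X)$ for every $s\in[t,T]$; the analogous statement holds for $X^1$ and for $X^2$. Granting this, note also that $\mathcal E(X^1)$ and $\mathcal E(X^2)$ exist, arguing as in Theorem~\ref{thm:existunique} from $\mathcal A(X^i)\neq\emptyset$ together with the positivity of $g$.

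For ``$\leq$'', let $(\mathcal E(X^1),Z^1)$ and $(\mathcal E(X^2),Z^2)$ be the minimal supersolutions attached to $X^1$ and $X^2$, and on $[t,T]$ put $\hat Y:=1_A\mathcal E(X^1)+1_{A^c}\mathcal E(X^2)$ and $\hat Z:=1_A Z^1+1_{A^c}Z^2$. Since $A\in\mathcal F_t\subseteq\mathcal F_s$ for $s\geq t$, the indicators $1_A,1_{A^c}$ factor through the stochastic integrals $\int_s^r\,\cdot\,dW$ with $t\leq s\leq r$; moreover $X=X^1$ on $A$ and $X=X^2$ on $A^c$ throughout $[t,T]$, whence $g_u(X_u,\hat Y_u,\hat Z_u)=1_A\,g_u(X^1_u,\mathcal E_u(X^1),Z^1_u)+1_{A^c}\,g_u(X^2_u,\mathcal E_u(X^2),Z^2_u)$ on $[t,T]$ and $\hat Y_T=1_A\varphi(X^1_T)+1_{A^c}\varphi(X^2_T)=\varphi(X_T)$. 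Multiplying the two supersolution inequalities by the respective indicator and adding shows that $(\hat Y,\hat Z)$ is a supersolution on $[t,T]$ for $X$ with admissible control (the pasted stochastic integral is a supermartingale, both indicators being nonnegative and $\mathcal F_t$-measurable). The restriction property then gives $\mathcal E_s(X)\leq\hat Y_s=1_A\mathcal E_s(X^1)+1_{A^c}\mathcal E_s(X^2)$ for all $s\in[t,T]$.

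For ``$\geq$'', let $(\mathcal E(X),Z)$ be the minimal supersolution for $X$. Because $X=X^1$ on $A$ for times in $[t,T]$, the pair $(\mathcal E(X),Z)$ satisfies the $X^1$-supersolution inequality on $A$, while $(\mathcal E(X^1),Z^1)$ satisfies it on $A^c$; exactly as above, $(1_A\mathcal E(X)+1_{A^c}\mathcal E(X^1),\,1_A Z+1_{A^c}Z^1)$ is then a supersolution on $[t,T]$ for $X^1$ with admissible control and terminal value $\varphi(X^1_T)$. By the restriction property for $X^1$, $1_A\mathcal E_s(X)=1_A\big(1_A\mathcal E_s(X)+1_{A^c}\mathcal E_s(X^1)\big)\geq 1_A\mathcal E_s(X^1)$ on $[t,T]$; symmetrically $1_{A^c}\mathcal E_s(X)\geq 1_{A^c}\mathcal E_s(X^2)$, and adding these two and combining with ``$\leq$'' yields the claim.

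The crux of the argument is the restriction property — that the value of the minimal supersolution on $[t,T]$ is insensitive to the data on $[0,t)$ and coincides with that of the minimal supersolution of the problem genuinely posed on $[t,T]$. Its nontrivial ingredient is the backward extension of a $[t,T]$-supersolution to a global one: the naive prolongation with vanishing control on $[0,t)$ fails, since $g\geq0$ forces the value process to increase as time runs backwards while $u\mapsto g_u(X_u,y,0)$ need not even be integrable. I expect to circumvent this via the dynamic programming and time-consistency properties of minimal supersolutions from \citep{CSTP,DrapeauTangpi}, prolonging the $[t,T]$-piece by a supersolution of the $[0,t]$-problem carrying its time-$t$ value $\hat Y_t$ as terminal datum (for which $\hat Y_t^{-}\in L^1$ follows from positivity of $g$ and admissibility), after which every remaining step is routine — the only recurring point being that an $\mathcal F_t$-measurable indicator commutes with stochastic integration over $[t,T]$ and with evaluation of $g$ along the concatenated forward processes.
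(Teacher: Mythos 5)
Your overall strategy coincides with the paper's: reduce to the problem posed on $[t,T]$, prove a restriction property identifying the $[t,T]$-minimal value with $\mathcal E(X)$ on $[t,T]$, and then obtain the decomposition by the indicator manipulations you describe (the paper phrases the latter as the set identity $\mathcal A_t(X)=1_A\mathcal A_t(X^1)+1_{A^c}\mathcal A_t(X^2)$, which encodes exactly your two pasting constructions), so the two inequalities and the role of $A\in\mathcal F_t$ are handled the same way.

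The one place your sketch would not go through as written is the proof of the restriction property itself. You propose to prolong an arbitrary $[t,T]$-supersolution $(\hat Y,\hat Z)$ backward by a supersolution of the $[0,t]$-problem with terminal datum $\hat Y_t$, and you verify $\hat Y_t^-\in L^1$; but existence of such a prolongation also requires the admissible set of that $[0,t]$-problem to be non-empty, and for an arbitrary $\hat Y_t$ (which may be very large) nothing guarantees this. The paper sidesteps the issue by prolonging only the \emph{minimal} $[t,T]$-supersolution $I^t(X)$, with $I^t_s(X)=\essinf\{Y_s:(Y,Z)\in\mathcal A_t(X)\}$, which exists by Theorem \ref{thm:existunique} applied on $[t,T]$: for any $(Y,Z)\in\mathcal A(X)$ --- non-empty by hypothesis --- the trivial inclusion $\mathcal{A}(X)_{|[t,T]}\subseteq \mathcal{A}_t(X)$ gives $Y_t\ge\mathcal E_t(X)\ge I^t_t(X)$, so the $[0,t)$-part of $(Y,Z)$ can be pasted onto $I^t(X)$ via \citep[Lemma 3.1]{CSTP} to produce an element of $\mathcal A(X)$, whence $I^t_s(X)\ge\mathcal E_s(X)$ and hence equality on $[t,T]$. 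Your restriction property for arbitrary $(\hat Y,\hat Z)$ then follows for free, since $\hat Y_s\ge I^t_s(X)=\mathcal E_s(X)$ by definition of the essential infimum; with this repair the rest of your argument is fine.
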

\begin{proof}
    Let us denote by
    \begin{equation}\label{eq_local1}
        \mathcal A_{t}(X) :=\crl{(Y,Z) \in\mathcal S_{|[t,T]}\times \mathcal L_{|[t,T]}\colon Z \text{ is admissible and \eqref{eq_original_tarpo} holds on }[t,T]}
    \end{equation}
    the set of supersolutions on $[t,T]$ and by $\mathcal{A}(X)_{|[t,T]}$ the restriction to $[t,T]$ of the elements of $\mathcal{A}(X)$.
    Clearly, $\mathcal{A}(X)_{|[t,T]}\subseteq \mathcal{A}_t(X)$, implying that
    \begin{equation*}
        I_s^{t}(X):=\essinf \left\{ Y_s\colon (Y,Z)\in \mathcal{A}_t(X) \right\}\leq \essinf \left\{ Y_s\colon (Y,Z)\in \mathcal{A}(X) \right\}=\mathcal{E}_s(X), \quad t\leq s\leq T.
    \end{equation*}
    Reversely, if $\mathcal{A}(X)\neq \emptyset$, then equality holds.
    Indeed, an application of Theorem \ref{thm:existunique} restricted to $[t,T]$ yields the existence of $\hat{Z} \in \mathcal{S}_{|[t,T]}$ such that $(I^t(X),\hat{Z})\in \mathcal{A}_t(X)$.
    For $(Y,Z) \in \mathcal{A}(X)$, it follows that $Y_t\geq \mathcal{E}_t(X)\geq I^{t}_t(X)$.
    Hence, by stability of supersolutions with respect to pasting, compare \citep[Lemma 3.1]{CSTP}, the pair defined by
    \begin{equation*}
        \tilde{Y}=1_{[0,t[}Y+1_{[t,T]}I^{t}(X)\quad\text{and}\quad\tilde{Z}=1_{[0,t]}Z+1_{]t,T]}\hat{Z}
    \end{equation*}
    belongs to $\mathcal{A}(X)$. 
    However, this implies $\tilde{Y}_s=I_s^{t}\geq \mathcal{E}_s(X)$ for $t\leq s\leq T$ and thus
    \begin{equation}\label{eq_I_equals_Ebis}
        I_t^{t}(X) = \mathcal{E}_t(X).
    \end{equation}
    With this at hand, under the assumption $\mathcal{A}_t(X^1),\mathcal{A}_t(X^2),\mathcal{A}_t(X)\neq \emptyset$ it is straightforward to check that $\mathcal{A}_t(X)=1_A\mathcal{A}_t(X^1)+1_{A^c}\mathcal{A}_t(X^2)$ since $A \in \mathcal{F}_t$ and therefore $I_s^t(X)=1_{A}I_s^t(X^1)+1_{A^c}I_s^t(X^2)$ for every $t\leq s\leq T$.
    In combination with \eqref{eq_I_equals_Ebis} the former yields
    \begin{equation*}
        \mathcal{E}_s(X)=1_A\mathcal{E}_s(X^1)+1_{A^c}\mathcal{E}_s(X^2), \quad t\leq s\leq T,
    \end{equation*}
    the proof is done.
\end{proof}

\section{Markovian minimal supersolutions}\label{sec4}
For the remainder, the forward process $X$ is given by the solution of the stochastic differential equation
\begin{equation*}\label{eq_SDE}
    X_t =X_0+\int_{0}^{t} \mu_u(X_u)du + \int_{0}^{t} \sigma_u(X_u)dW_u,
\end{equation*}
where $X_0 \in \mathbb{R}^n$ and $\mu\colon[0,T]\times \R^n \to \R^n$ and $\sigma \colon [0,T]\times \R^n \to \R^{n\times d}$ are jointly measurable functions satisfying the usual assumptions of SDE theory, namely
\begin{enumerate}[label=\textsc{(sde)},leftmargin=40pt]
    \item\label{sde} $\mu_\cdot(0)$ and $\sigma_\cdot(0)$ belong to $\mathcal L^2$; $\sigma$ and $\mu$ are uniformly Lipschitz and of linear growth in their second component.
\end{enumerate}
The goal of the current section is to show that in this case $\mathcal{E}_t(X)=u(t,X_t)$ where $u$ is a function defined on $[0,T]\times \mathbb{R}^n$.
To this end, given $t \in [0,T]$, we first define for every $\xi \in L^2(\mathcal{F}_t)$ the process $X^{t,\xi}$ as the unique solution of  
\begin{align}
    X^{t,\xi}_s &=\xi+\int_{t}^{s} \mu_u(X^{t,\xi}_u)du + \int_{t}^{s} \sigma_u(X^{t,\xi}_u)dW_u,&& t\leq s\leq T \nonumber\\
    X_s^{t,\xi}&=\xi-\int_s^t \mu_u(X^{t,\xi}_u)du-\int_{s}^t Z_u dW_u,&& 0\leq s\leq t.\label{eq_SDE_BSDE}
\end{align}
Notice that $X^{t,\xi}$ is well defined and uniquely determined. 
Indeed, it is the unique solution of an SDE with Lipschitz coefficients between $t$ and $T$ and initial value $\xi \in L^2(\mathcal{F}_t)$ and the unique solution of the Lipschitz BSDE with driver $\mu$ between $0$ and $t$ and terminal condition $\xi$.
It is furthermore continuous and adapted.
Uniqueness of these solutions in particular yields
\begin{equation*}
    X=X^{t,X_t}
\end{equation*}
and for every $\xi=\sum_{k=1}^n 1_{A_k} x_k$, where $(A_k)\subseteq \mathcal{F}_t$ is a partition, it holds
\begin{equation}\label{eq:stab}
    X^{t,\xi}_s=\sum 1_{A_k}X_s^{t,x_k},\quad t\leq s\leq T.
\end{equation}
Next, we need to consider the $t$-shifted problem.
More precisely, let $W^t:=W_{t+\cdot}-W_t$ be the Brownian motion on $[0,T-t]$ together with the corresponding filtration $\mathcal F^t_s :=\sigma(W^t_r : 0\le r\le s)$.
Accordingly, for each $x\in\R^n$ define $\tilde{X}^{t,x}$ as the solution of the stochastic differential equation
\begin{equation*}
    \tilde{X}^{t,x}_s=x+\int_0^s \mu_{t+u}(\tilde{X}^{t,x}_u)du+\int_{0}^{s}\sigma_{t+u}(\tilde{X}^{t,x}_u)dW_u^t, \quad 0\leq s\leq T-t.
\end{equation*}
Similarly, $t$-shifted supersolutions are those pairs $(Y,Z)\in \mathcal{S}(\mathcal{F}^t)\times \mathcal{L}(\mathcal{F}^t)$ such that
\begin{equation}\label{eq_tarpo_shift}
    Y_r-\int_r^s g_{t+u}(\tilde{X}^{t,x}_u,Y_u,Z_u)du+\int_r^s Z_{u}dW^t_{u}\geq Y_s\quad\mbox{and}\quad Y_{T-t}\geq \varphi\left(\tilde{X}_{T-t}^{t,x}\right).
\end{equation}
and we collect all $t$-shifted supersolutions on $[0,T-t]$ in the set
\begin{equation*}
    \tilde{\mathcal{A}}(\tilde{X}^{t,x}):= \left\{ (Y,Z) \in\mathcal S(\mathcal{F}^t)\times\mathcal L(\mathcal F^t) \colon \eqref{eq_tarpo_shift}\text{ holds and }\int ZdW^t \text{ is a supermartingale} \right\}.
\end{equation*}
Analogously, we denote by $\tilde{\mathcal{E}}(\tilde X^{t,x})$ the $t$-shifted minimal supersolution operator and define our candidate function $u\colon[0,T]\times \mathbb{R}^n\to [-\infty, \infty]$ by
\begin{equation*}\label{eq_def_u}
    u(t,x):=\inf \left\{Y_0 \colon (Y,Z)\in \tilde{\mathcal{A}}(\tilde{X}^{t,x})\right\}=\tilde{\mathcal{E}}_0(\tilde X^{t,x}).
\end{equation*}
The reader should keep in mind that for the sequel a ``tilde'' appearing in the notation of expressions always indicates a relation to the $t$-shifted problem on $[0,T-t]$ above.\\

The ensuing theorem provides the second contribution of this work by collecting important properties of $u$ and drawing the connection between the original problem, the $t$-shifted one and the function $u$.
\begin{theorem}\label{thm_main}
    We suppose that $g$ is a generator satisfying \ref{std} and \ref{rec}, $\mu$ and $\sigma$ satisfy \ref{sde}, and $\varphi$ is lower semicontinuous and linearly bounded from below.
    Then the following assertions hold true:
    \begin{enumerate}[label=\textit{(\roman*)}]
        \item $x\mapsto u(t,x)$ is lower semicontinuous, either identically $\infty$ or proper for every $t \in [0,T]$.
            If furthermore $g, \varphi, \mu$ and $\sigma$ are convex, then $x\mapsto u(t,x)$ is convex.
        \item If $\mathcal{A}(X^{t,x})\neq\emptyset$, then it holds
            \begin{equation*}\label{eq_equality_u_e}
                \mathcal{E}_t\left( X^{t,x} \right)=u(t,x).
            \end{equation*}
            In particular, $\mathcal{E}_t(X^{t,x})$ is a real number corresponding to the infimum of the $t$-shifted minimal solution problem.
        \item It holds
            \begin{equation*}\label{eq:equality}
                \mathcal{E}_t(X)\geq u(t,X_t)
            \end{equation*}
            with equality if $\mathcal{A}(X)\neq \emptyset$ and $x\mapsto u(t,x)$ is
            \begin{itemize}
                \item either continuous;
                \item or monotone and $X \geq C$ uniformly for some constant $C \in \mathbb{R}$.
            \end{itemize}
    \end{enumerate}
\end{theorem}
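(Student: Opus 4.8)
My plan is to treat the three assertions in order, bootstrapping each from the stability and locality machinery of Section \ref{sec3} applied to the $t$-shifted problem.

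For part \textit{(i)}, the key point is that the $t$-shifted problem is itself a forward backward minimal supersolution problem with generator $g_{t+\cdot}$, terminal map $\varphi$ and forward process $\tilde X^{t,x}$, so Theorem \ref{prop:stability} applies verbatim to the $t$-shifted setting. First I would fix $t$ and take a sequence $x_n \to x$ in $\R^n$; by standard $L^2$-stability of SDEs with Lipschitz coefficients under \ref{sde}, one has $\tilde X^{t,x_n}_s \to \tilde X^{t,x}_s$ almost surely for every $s$ (along a subsequence, and hence by a subsequence argument for the liminf), and the linear lower bound on $\varphi$ together with linear growth of $\tilde X^{t,x_n}$ gives a uniform integrable lower bound $\varphi(\tilde X^{t,x_n}_{T-t}) \geq -\eta$ with $\eta \in L^1_+$. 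The $\liminf$ inequality \eqref{thm_stab1} of Theorem \ref{prop:stability}, read in the $t$-shifted problem, then yields $u(t,x) = \tilde{\mathcal E}_0(\tilde X^{t,x}) \leq \liminf_n \tilde{\mathcal E}_0(\tilde X^{t,x_n}) = \liminf_n u(t,x_n)$, which is lower semicontinuity. Properness (or $\equiv \infty$) of $x \mapsto u(t,x)$ follows because $u(t,x) \geq 0$ if $\tilde{\mathcal A}(\tilde X^{t,x}) \neq \emptyset$ (positivity of $g$ forces the value process to dominate the conditional expectation of $\varphi(\tilde X_{T-t}^{t,x})$, which is real-valued), while $u(t,x) = \infty$ when $\tilde{\mathcal A}(\tilde X^{t,x}) = \emptyset$; to rule out an intermediate ``$-\infty$ somewhere, finite elsewhere'' situation one uses that $u(t,\cdot) \geq -\eta$-type bounds are uniform. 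For the convexity statement, when $g,\varphi,\mu,\sigma$ are convex, one checks that $\tilde X^{t,\cdot}$ is convex as a map $x \mapsto \tilde X^{t,x}$ (convexity propagates through the Lipschitz SDE by a Gronwall/iteration argument), that $\tilde{\mathcal A}(\cdot)$ behaves well under convex combinations of forward processes — concatenating supersolutions and using convexity of $g$ in $(x,y,z)$ jointly — and concludes $u(t,\lambda x + (1-\lambda)x') \leq \lambda u(t,x) + (1-\lambda)u(t,x')$.

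For part \textit{(ii)}, I would exploit that $X^{t,x}$ on $[t,T]$ is, by uniqueness of solutions of the SDE, the ``un-shift'' of $\tilde X^{t,x}$: precisely $X^{t,x}_{t+\cdot}$ and $\tilde X^{t,x}$ have the same law, and more is true — there is a measure-preserving correspondence between $\mathcal F$-adapted supersolutions of \eqref{eq_original_tarpo} restricted to $[t,T]$ and $\mathcal F^t$-adapted $t$-shifted supersolutions \eqref{eq_tarpo_shift}, because $W_{t+\cdot} - W_t = W^t$ and the generator is evaluated at $g_{t+u}$. Under this correspondence $\mathcal A_t(X^{t,x})$ (the restricted supersolutions, as in \eqref{eq_local1}) is in bijection with $\tilde{\mathcal A}(\tilde X^{t,x})$, preserving the value at the initial time. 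Hence $I^t_t(X^{t,x}) = u(t,x)$ deterministically, since the initial value $Y_t$ of a supersolution on $[t,T]$ is $\mathcal F_t$-measurable and the shifted counterpart's $Y_0$ is $\mathcal F^t_0$-trivial, so the essential infimum is a genuine infimum and equals $u(t,x)$. Finally, invoking \eqref{eq_I_equals_Ebis} from the proof of Proposition \ref{prop:locality}, namely $I^t_t(X^{t,x}) = \mathcal E_t(X^{t,x})$ whenever $\mathcal A(X^{t,x}) \neq \emptyset$, gives $\mathcal E_t(X^{t,x}) = u(t,x)$; the realness claim follows from part \textit{(i)}.

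For part \textit{(iii)}, I use $X = X^{t,X_t}$ together with locality. Write $X_t$ as an increasing (componentwise, or in a chosen order) limit of simple $\mathcal F_t$-measurable step functions $\xi^n = \sum_k 1_{A^n_k} x^n_k \uparrow X_t$ — this uses the lower bound $X \geq C$ in the monotone case, and is done locally/truncation-wise in the continuous case. By \eqref{eq:stab}, $X^{t,\xi^n}_s = \sum_k 1_{A^n_k} X^{t,x^n_k}_s$ on $[t,T]$, so by Proposition \ref{prop:locality} (locality of $\mathcal E$) and part \textit{(ii)}, $\mathcal E_t(X^{t,\xi^n}) = \sum_k 1_{A^n_k}\mathcal E_t(X^{t,x^n_k}) = \sum_k 1_{A^n_k} u(t,x^n_k) = u(t,\xi^n)$. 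Now $\xi^n \to X_t$ a.s., and $\tilde X^{t,\xi^n} \to \tilde X^{t,X_t}$-type SDE stability together with the $\liminf$ part of Theorem \ref{prop:stability} (applied $\omega$-wise, or to the random forward processes directly using the general version of stability) gives $\mathcal E_t(X) = \mathcal E_t(X^{t,X_t}) \leq \liminf_n \mathcal E_t(X^{t,\xi^n}) = \liminf_n u(t,\xi^n)$; by lower semicontinuity from part \textit{(i)} applied pathwise, $\liminf_n u(t,\xi^n) \geq u(t,X_t)$ is the wrong direction, so instead I read it as: $u(t,X_t) \leq \liminf_n u(t,\xi^n)$ (lsc) and the monotone approximation $\xi^n \uparrow X_t$ forces $u(t,\xi^n) \uparrow$ something $\leq u(t,X_t)$ when $u(t,\cdot)$ is monotone or continuous, giving equality. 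The inequality $\mathcal E_t(X) \geq u(t,X_t)$ without extra hypotheses comes from the other direction: any supersolution of \eqref{eq_original_tarpo} restricted to $[t,T]$ gives, via the shift correspondence of part \textit{(ii)} but now localized on each $A^n_k$, a lower bound $\mathcal E_t(X) \geq \mathcal E_t(X^{t,\xi^n}) = u(t,\xi^n)$ failing monotonicity — more carefully, one uses that $\xi^n \leq X_t$ combined with monotone stability \eqref{thm_stab2}.

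\emph{Main obstacle.} The crux — and the place I expect to spend the most care — is the rigorous shift correspondence in part \textit{(ii)}: identifying $\mathcal A_t(X^{t,x})$ with $\tilde{\mathcal A}(\tilde X^{t,x})$ as a value-preserving bijection, keeping track of the two filtrations $(\mathcal F_s)_{s \in [t,T]}$ versus $(\mathcal F^t_s)_{s \in [0,T-t]}$, the admissibility/supermartingale conditions, and the measurability subtleties that make $I^t_t$ a deterministic constant equal to $u(t,x)$. Everything else is a fairly mechanical deployment of Theorem \ref{prop:stability}, Proposition \ref{prop:locality}, \eqref{eq_I_equals_Ebis}, \eqref{eq:stab}, and SDE stability; but the direction of the inequalities in part \textit{(iii)} — in particular matching the need for monotonicity/continuity of $u(t,\cdot)$ to the approximation from below — also needs a clean bookkeeping of which of \eqref{thm_stab1} versus \eqref{thm_stab2} is being used where.
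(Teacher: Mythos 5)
Your overall architecture is the paper's: part \textit{(i)} via SDE stability plus Theorem \ref{prop:stability} read in the shifted problem, part \textit{(ii)} via the identification $\mathcal E_t(X^{t,x})=I^{t,x}_t$ (the restricted-problem value, as in \eqref{eq_I_equals_Ebis}) followed by a shift correspondence, and part \textit{(iii)} via step-function approximation of $X_t$, locality (Proposition \ref{prop:locality}), part \textit{(ii)}, and the $\liminf$ stability bound closed up by continuity or monotonicity of $u(t,\cdot)$. Parts \textit{(i)} and the ``equality'' half of \textit{(iii)} are essentially correct as planned.

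The genuine gap is exactly at the point you flag as the crux, and your proposed mechanism there does not work. There is no bijection between $\mathcal A_t(X^{t,x})$ and $\tilde{\mathcal A}(\tilde X^{t,x})$: elements of $\mathcal A_t(X^{t,x})$ are adapted to the full filtration $(\mathcal F_s)_{s\in[t,T]}$ and may therefore depend on the pre-$t$ increments of $W$, whereas elements of $\tilde{\mathcal A}(\tilde X^{t,x})$ are $\mathcal F^t$-adapted. Un-shifting gives only an injection of the latter into the former, hence only $I^{t,x}_t\leq u(t,x)$; the essential infimum over the strictly larger set $\mathcal A_t(X^{t,x})$ could a priori be smaller, and ``$Y_0$ is $\mathcal F^t_0$-trivial'' does not rule this out. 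The missing ingredient is the concatenation/conditioning argument: for a.e.\ $\bar\omega$ one freezes the past and checks that $\omega\mapsto\bigl(I^{t,x}_{s+t}(\bar\omega\otimes_t\omega),Z^{t,x}_{s+t}(\bar\omega\otimes_t\omega)\bigr)$, with $\otimes_t$ as in \eqref{eq_omega_concatenation}, is $\mathcal F^t$-adapted and belongs to $\tilde{\mathcal A}(\tilde X^{t,x})$ (using that $g$ is deterministic and $\tilde X^{t,x}_s=X^{t,x}_{t+s}$), whence $I^{t,x}_t\geq u(t,x)$ almost surely. The same omission propagates to \textit{(iii)}: the unconditional inequality $\mathcal E_t(X)\geq u(t,X_t)$ is \emph{not} obtainable from ``$\xi^n\leq X_t$ combined with monotone stability \eqref{thm_stab2},'' since \eqref{thm_stab2} requires $x\mapsto g(x,\cdot,\cdot)$ and $\varphi$ to be increasing, which is not assumed. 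It follows instead by applying the same pathwise freezing argument directly to $X=X^{t,X_t}$, which yields $\mathcal E_t(X)(\bar\omega)\geq u(t,X_t(\bar\omega))$ for a.e.\ $\bar\omega$ with no structural hypotheses. Until the concatenation step is supplied, both the hard inequality of \textit{(ii)} and the first claim of \textit{(iii)} are unproved.
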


\begin{proof}
    For the remainder of the proof, we fix $t \in [0,T]$.
    \begin{enumerate}[label=\textit{Point (\roman*):}, fullwidth]
        \item For $x_n \to x$, up to a subsequence it holds $\lim \tilde{X}^{t, x_n}_{s}= \tilde{X}^{t,x}_s$ for every $s$ and $\inf_n \tilde{X}^{t,x_n}_T \in L^2$, both as a consequence of \citep[Theorem 2.4]{Touzi_BSDE}.
            Since $\varphi$ is lower semicontinuous and linearly bounded from below, it follows that $\inf_n \varphi(\tilde{X}^{t,x_n}_T)^-\in L^1$ and therefore the stability Proposition \ref{prop:stability} yields 
            \begin{equation*}
                \liminf u(t, x_n)=  \liminf\tilde{\mathcal{E}}_0(\tilde X^{t,x_n})\ge \tilde{\mathcal{E}}_0(\tilde X^{t,x})=u(t,x).
            \end{equation*}
            Finally, it holds that $\tilde{\mathcal{E}}_0(\tilde X^{t,x})\geq E[\varphi(\tilde{X}^{t,x}_T)]>-\infty$, by which we deduce that $u$ is either proper or uniformly equal to $\infty$.
            The proof of the convexity property goes along the lines of the argumentation in \citep[Proposition 3.3.(4)]{CSTP}.

        \item 
            First, let $X^{t,x}$ be defined as in \eqref{eq_SDE_BSDE}.
            In analogy to the proof of Proposition \ref{prop:locality} we obtain
            \begin{equation}\label{eq_I_equals_E}
            \mathcal{E}_t(X^{t,x}) = I_t^{t,x} 
            \end{equation}
            where $I^{t,x}_t=\essinf\{ Y_t: (Y,Z) \in \mathcal{A}_t(X^{t,x})\}$ and $\mathcal A_{t}(X^{t,x})$ is defined analogously to \eqref{eq_local1}.
            It remains to show the equality $I^{t,x}_t=u(t,x)$.
            In other terms, we need to establish the relation between the set $\mathcal{A}_t(X^{t,x})$ of supersolutions between $[t,T]$ with forward process $X^{t,x}$ and the set $\tilde{\mathcal{A}}(\tilde{X}^{t,x})$ of $t$-shifted supersolutions on $[0,T-t]$ with forward process $\tilde{X}^{t,x}$.
            Clearly, for every $(Y,Z)\in \tilde{\mathcal{A}}(\tilde{X}^{t,x})$, the observation $X^{t,x}_s=\tilde{X}^{t,x}_{s-t}$ implies that $(\bar{Y},\bar{Z}):=(Y_{\cdot-t},Z_{\cdot -t})\in \mathcal{A}_t(X^{t,x})$, showing in turn that $I^{t,x}_t\leq u(t,x)$.
            Together with \eqref{eq_I_equals_E} this implies $\mathcal E_t(X^{t,x}) \leq u(t,x)$.
            Reciprocally, since $\mathcal{A}(X^{t,x})$ is non-empty, so is $\mathcal{A}_t(X^{t,x})$ and thus there exists a control $Z^{t,x}$ corresponding to the $[t,T]$-minimal supersolution $I^{t,x}$. 
            Observe that for almost all $\bar\omega\in \Omega$ 
            \begin{equation*}
                \omega\mapsto  (Y^{\bar\omega}_s,Z^{\bar\omega}_s):= \brak{I_{s+t}^{t,x}(\bar\omega\otimes_{t}\omega), Z^{t,x}_{s+t}(\bar\omega\otimes_{t}\omega) }  \qquad s\in[0,T-t]
            \end{equation*}
            is a $t$-shifted supersolution with forward process $\tilde{X}^{t,x}$, that is, an element of $\tilde{\mathcal{A}}(\tilde{X}^{t,x})$.
            Indeed, it is measurable by definition and defines a pair of a \cadlag\, and a progressive process on $[0,T-t]$.
            In addition, this pair is adapted to $\mathcal F^t$.
            This follows from it being a functional of $\bar\omega\otimes_{t}\omega$ and thus by means of \eqref{eq_omega_concatenation} of $(\omega_{t+s}-\omega_t)_{s\in[0,T-t]}$.
            The fact that it satisfies \eqref{eq_tarpo_shift} follows from $\tilde{X}^{t,x}_{s}=X^{t,x}_{t+s}$ and the generator $g$ not depending on $\omega$.
            Hence, $(Y^{\bar\omega}_s,Z^{\bar\omega}_s)_{s\in[0,T-t]} \in \tilde{\mathcal{A}}(\tilde{X}^{t,x})$ and therefore, for almost all $\bar\omega\in \Omega$, it holds $Y^{\bar\omega}_0 \ge  \tilde{\mathcal{E}}_0(\tilde X^{t,x}) = u(t,x)$.
            Using the definition of $Y^{\bar\omega}$ in combination with \eqref{eq_I_equals_E} we obtain
            \begin{equation*}
                \mathcal E_t(X^{t,x}) = I^{t,x}_t \geq u(t,x) \quad P \text{-almost surely},
            \end{equation*}
            proving Point $(ii)$.

        \item 
            The inequality $\mathcal{E}_t(X)\geq u(t,X_t)$ is obtained by the path-wise argumentation of the previous point.
            Suppose now that $x\mapsto u(t,x)$ is continuous or increasing.
            Since $x\mapsto u(t,x)$ is lower semi-continuous, if 
            \begin{itemize}
                \item it is continuous, for every sequence of random variables $(X^n_t)\subseteq L^2(\mathcal{F}_t)$ converging to $X_t$, it holds $\lim u(t,X_t^n)=u(t,X_t)$.
                    In this case, we approximate $X_t$ by step functions $X^n_t\to X_t$ where for each $n$ we have $X^n_t= \sum_{k=1}^n1_{A^n_k} x^n_k$.
                \item it is monotone, for every increasing sequence of random variables $(X^n_t)\subseteq L^2(\mathcal{F}_t)$ converging to $X_t$, it holds $\lim u(t,X_t^n)=u(t,X_t)$.
                    In this case, since $X \geq C$ uniformly, we approximate $X_t$ from below by step functions, that is $X^n_t\nearrow X_t$ where for each $n$ we have $X^n_t= \sum_{k=1}^n1_{A^n_k} x^n_k$.
            \end{itemize}
            Using $(X^n_t)$ we define the family of terminal values $(X^n_T)$ by 
            \begin{equation*} 
                X^n_T := X^{t,\sum_{k=1}^n1_{A^n_k} x^n_k}_T
            \end{equation*} 
            which, by means of \eqref{eq:stab}, satisfy
            \begin{equation*} 
                X^{t,\sum_{k=1}^n1_{A^n_k} x^n_k}_s = \sum_{k=1}^n1_{A^n_k}  X^{t,x^n_k}_s,\quad t\leq s\leq T.
            \end{equation*}
            It clearly holds $X^n_s\to X_s$ for every $s\geq t$ and in the case of monotonicity, $X^n_t\nearrow X_t$.
            The function $x\mapsto u(t,x)$ being either increasing or continuous yields
            \begin{equation}\label{eq01}
                \liminf u(t,X^n_t)=\lim u(t,X_t^n)=u(t,X_t).
            \end{equation}
            Furthermore, by locality of $\mathcal{E}$, see Proposition \ref{prop:locality}, we have
            \begin{equation}\label{eq02}
                \mathcal{E}_t(X^n)=\sum 1_{A_k^n} \mathcal{E}_t(X^{t,x_k^n})=\sum 1_{A_k^n} u(t,x_k^n)=u(t, X^n_t).
            \end{equation}
            Finally, the stability result of Theorem \ref{prop:stability} together with relations \eqref{eq01} and \eqref{eq02} yields
            \begin{equation*}
                \mathcal{E}_t(X)\leq \liminf \mathcal{E}_t(X^n)=\liminf u(t,X^n_t)=u(t,X_t),
            \end{equation*}
            showing the reverse inequality and thereby completing the proof.
    \end{enumerate} 
\end{proof}

\section{Viscosity supersolutions}\label{sec5}

The last relation of Theorem \ref{thm_main}, namely $\mathcal E_t(X)=u(t,X_t)$, holds in the special cases of monotonicity or continuity.
The current and final section shows that it is also valid as soon as $g$ is jointly convex and even more, in this case the minimal supersolution can be interpreted as a viscosity supersolution of a corresponding PDE.

To begin with, following the notations and definitions in \cite{lions1992}, \cite{pardoux2014} and \cite{Touzi_BSDE}, we consider semilinear parabolic PDEs with terminal conditions of the form
\begin{equation}
        -\partial_t v(t,x)-F(t,x,v(t,x), Dv(t,x), D^2 v(t,x))=0,\quad \text{and}\quad v(T,x)=\varphi(x)
    \label{eq:pde01}
\end{equation}
with $v:[0,T]\times \mathbb{R}^d\to \mathbb{R}$, $\varphi: \mathbb{R}^d\to \mathbb{R}$ and $F:[0,T]\times \mathbb{R}^d\times \mathbb{R}\times \mathbb{R}^d\times \mathcal{S}(d)\to \mathbb{R}$.
Here, $\mathcal{S}(d)$ denotes the set of symmetric $d\times d$ matrices, while $F$ is supposed to be lower semicontinuous.
Further, $Dv$ and $D^2v$ corresponds to the gradient vector and matrix of second partial derivatives of $v$, respectively.
In the case under consideration $F$ is of the form
\begin{equation*}
    F(t,x,v,Dv,D^2v)=\mu_t(x)Dv+\text{tr}\left(\frac{1}{2}\sigma^2_t(x)D^2v\right)+g_t(x,v,\sigma_t(x)Dv).
\end{equation*}
Note that as $\sigma_t(x)$ is positive semi-definite, $F$ is degenerate elliptic.
\begin{definition}
    A viscosity supersolution of \eqref{eq:pde01} is a lower semicontinuous function $u:[0,T]\times \mathbb{R}^d\to \mathbb{R}$ such that 
    \begin{equation*}
        -a-F(t,x,u(t,x),p,M)\geq 0 \quad \text{for all }(t,x)\in [0,T]\times \mathbb{R}^d\text{ and }(a,p,M)\in \mathcal{P}^{-(1,2)}u(t,x)
    \end{equation*}
    where $\mathcal{P}^{-(1,2)}u(t,x)$ are the semi-jets of $u$ at $(t,x)$, that is those $(a,p,M)\in \mathbb{R}\times \mathbb{R}^d\times \mathcal{S}(d)$ satisfying
    \begin{equation*}
        u(t^\prime,x^\prime)\geq u(t,x)+a(t^\prime-t)+\langle p,x^\prime-x\rangle +\frac{1}{2}\langle M(x^\prime-x),x^\prime-x\rangle +o\left( \abs{t^\prime-t}+\abs{x^\prime-x} \right)
    \end{equation*}
    for every $(t^\prime,x^\prime)\in [0,T]\times \mathbb{R}^d$.
\end{definition}

\begin{theorem}\label{thm_main2}
    Assume that the assumptions of Theorem \ref{thm_main} are fulfilled and $g$ is convex.
    If in addition $\varphi$ is bounded from below, that is $\varphi\ge C$ for some $C\in\R$, and $\mathcal{A}(X)\neq\emptyset$, then it holds
    \begin{equation}\label{eq_equality_u_e2}
        \mathcal{E}_t(X)=u(t,X_t), \quad t \in [0,T].
    \end{equation}
    Furthermore, $u$ is the unique minimal\footnote{In the sense that for any other viscosity supersolution $v$ of the PDE \eqref{eq:pde01}, it holds $v\geq u$.} lower semicontinuous viscosity supersolution of the PDE \eqref{eq:pde01}.
\end{theorem}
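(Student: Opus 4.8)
The strategy is to exploit the joint convexity of $g$ by approximating $g$ and $\varphi$ from below by Lipschitz data, for which the minimal supersolution is a genuine solution of a Lipschitz BSDE with a classical PDE representation, and then to pass to the monotone limit. Concretely, since $(x,y,z)\mapsto g(x,y,z)$ is positive, lower semicontinuous and jointly convex, its Lipschitz regularisations $g^n(t,x,y,z):=\inf\{g(t,x',y',z')+n(|x-x'|+|y-y'|+|z-z'|)\}$ are $n$-Lipschitz, positive, jointly convex generators with $g^n\nearrow g$ pointwise, and $\varphi^n(x):=\inf\{\varphi(x')+n|x-x'|\}$ is $n$-Lipschitz, bounded below by $C$, with $\varphi^n\nearrow\varphi$ by lower semicontinuity of $\varphi$. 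For each $n$, $g^n$ being Lipschitz, the forward backward minimal supersolution with data $(g^n,\varphi^n)$ coincides with the unique solution of the corresponding Lipschitz BSDE; this is the linearisation device of \citep{DrapeauTangpi}, see also \citep{CSTP}.

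Fix $n$ and apply the classical correspondence between Lipschitz BSDEs and semilinear parabolic PDEs, see \citep{karoui01,Touzi_BSDE,pardoux2014}, to the $t$-shifted problem driven by $\tilde X^{t,x}$ with data $(g^n,\varphi^n)$: its value at the initial time is a continuous function $v^n(t,x)$, the unique polynomially bounded viscosity solution of the PDE \eqref{eq:pde01} with $(g,\varphi)$ replaced by $(g^n,\varphi^n)$. After the trivial reparametrisation relating the shifted PDE on $[0,T-t]$ to the original one at time $t$, $v^n$ coincides with the candidate function of the $n$-th problem, that is $u^n(t,x):=\tilde{\mathcal E}^n_0(\tilde X^{t,x})=v^n(t,x)$, so $x\mapsto u^n(t,x)$ is continuous. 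The same correspondence applied to the original diffusion $X$ together with the Markov property of the SDE-BSDE system give $\mathcal E_t(X;g^n,\varphi^n)=v^n(t,X_t)=u^n(t,X_t)$. Since $(g^n)$ and $(\varphi^n)$ increase, so do $(u^n)$ and $(v^n)$.

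Now pass to the limit. As $g^n\nearrow g$ and $\varphi^n(\tilde X^{t,x}_{T-t})\nearrow\varphi(\tilde X^{t,x}_{T-t})$ with negative part in $L^1$ (since $\varphi\ge C$), stability of the minimal supersolution with respect to the generator and the terminal condition, compare \citep{CSTP} and the proof of Theorem \ref{prop:stability}, yields $u^n(t,x)\nearrow\tilde{\mathcal E}_0(\tilde X^{t,x})=u(t,x)$ pointwise; using $\mathcal A(X)\neq\emptyset$, the same stability at time $t$ gives $\mathcal E_t(X;g^n,\varphi^n)\nearrow\mathcal E_t(X)<\infty$. Hence $\mathcal E_t(X)=\lim_n u^n(t,X_t)=\sup_n u^n(t,X_t)=u(t,X_t)$ $P$-almost surely, by monotone convergence at the random point $X_t$, which proves \eqref{eq_equality_u_e2}. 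Moreover $u=\sup_n v^n$ is lower semicontinuous, being an increasing supremum of continuous functions, and $u(T,\cdot)=\varphi$; since the Hamiltonians satisfy $F^n\nearrow F$ with $F$ lower semicontinuous, the standard monotone stability of viscosity supersolutions (half-relaxed limits in the sense of Barles-Perthame, see \citep{Touzi_BSDE,pardoux2014}) shows that $u$ is a viscosity supersolution of \eqref{eq:pde01}.

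Finally, minimality: let $v$ be any lower semicontinuous viscosity supersolution of \eqref{eq:pde01}. Since $F^n\le F$, $v$ is also a viscosity supersolution of the $n$-th PDE, and $v(T,\cdot)=\varphi\ge\varphi^n=v^n(T,\cdot)$, so the comparison principle for the Lipschitz, degenerate elliptic $n$-th PDE, see \citep{lions1992,Touzi_BSDE}, gives $v\ge v^n$ on $[0,T]\times\R^d$; letting $n\to\infty$, $v\ge\sup_n v^n=u$. Together with the preceding paragraph this identifies $u$ as the unique minimal lower semicontinuous viscosity supersolution. The main obstacle is the passage to the limit in the viscosity sense in the third paragraph: since both $v^n\nearrow u$ and $F^n\nearrow F$ are merely monotone, a naive limit is not available and one must run the half-relaxed limit argument, verifying in particular that the lower relaxed semi-limit of $(v^n)$ coincides with $u$ precisely because the $v^n$ are continuous and increasing; a secondary matter is to keep all quantities finite and within the growth class in which the comparison principle for the approximating PDEs is available.
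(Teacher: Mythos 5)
Your proposal is correct and follows essentially the same route as the paper: approximate $(g,\varphi)$ from below by Lipschitz data so that the minimal supersolution becomes the unique Lipschitz BSDE solution with a continuous PDE representation, pass to the monotone limit via the stability of minimal supersolutions, identify the half-relaxed lower limit of the increasing continuous approximants with their supremum (this is exactly the paper's Lemma \ref{lemma_cosima}), and obtain minimality from the comparison principle for the approximating PDEs. The only cosmetic differences are that you regularize via inf-convolution where the paper uses the truncated Fenchel--Moreau biconjugate $g^n$ and the cut-off $\varphi\wedge n$; both yield the same increasing Lipschitz approximation scheme.
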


\begin{proof}
    Note that if $\mathcal{A}(X)\neq \emptyset$, then $g$ is proper.
    As in \cite{DrapeauTangpi}, for each $n$ define
    \begin{equation*}
        g^n(x,y,z):= \sup_{\abs{\alpha}\vee\abs{\beta}\vee \abs{\gamma}\leq n} \crl{\alpha x + \beta y + \gamma z - g^*(\alpha,\beta,\gamma)}\quad \text{and}\quad \varphi^n(x)=\varphi(x)\wedge n
    \end{equation*}
    where $g^*$ is the convex conjugate of $g$.
    By Fenchel-Moreau, the sequence $(g^n)$ converges pointwise from below to $g$, while each $g^n$ is of linear growth.
    Being in addition convex, each $g^n$ is also Lipschitz continuous.
    Analogously to Section \ref{sec3}, we define $\mathcal E^n(X)$ as the minimal supersolution of the FBSDE with generator $g^n$, forward process $X$ and terminal function $\varphi^n$.
    As $g^n$ is Lipschitz and $\varphi^n$ is bounded, it follows from \citep[Remark 3.6]{DrapeauTangpi} that the minimal supersolution $\mathcal E^n(X)$ corresponds to the unique solution of the Lipschitz BSDE with generator $g^n$ and terminal condition $\varphi^n(X_T)$.
    
    Hence, a well-established result connecting Lipschitz BSDEs and semilinear PDEs, compare for instance \citep[Proposition 10.8]{Touzi_BSDE}, yields $u_n:[0,T]\times\R^d\to\R$ such that 
    \begin{equation}\label{equality_lipschitz}
        \mathcal E^n_t(X) = u_n(t,X_t).
    \end{equation}
    where $u_n$ is a continuous solution of the PDE \eqref{eq:pde01} with $F^n$ and $\varphi^n$ instead of $F$ and $\varphi$ respectively.
    Note that in addition, for each $t\in[0,T]$ the function $u_n(t,\cdot)$ corresponds exactly to the $t$-shifted problem with generator $g^n$ used in the proof of Theorem \ref{thm_main}.
    More precisely,
    \begin{equation*}\label{equality_lipschitz2}
        u_n(t,x)=\mathcal E^{n}_t(X^{t,x})=\tilde{\mathcal{E}}^n_0(\tilde X^{t,x})
    \end{equation*}
    with the notation analogous to above and $n$ indicating of course that $g^n$ is considered instead of $g$.
    Using the stability property of minimal supersolutions with respect to increasing drivers, see \citep[Theorem 4.14]{CSTP}, slightly adapted to in addition having increasing terminal conditions, it follows from $\mathcal{E}_0(X)<\infty$ that
    \begin{equation*}
        \mathcal{E}^n_t(X)\nearrow \mathcal{E}_t(X).
    \end{equation*}
    On the other hand, by the same argumentation for the shifted problem we deduce that
    \begin{equation*}
        u_n(t,x)\nearrow u(t,x),
    \end{equation*}
    pointwise which, together with \eqref{equality_lipschitz}, yields the desired relation \eqref{eq_equality_u_e2}.

    We are left to show that $u$ is a lower semicontinuous viscosity supersolution of the PDE \eqref{eq:pde01}.
    By means of \citep[Remark 6.3]{lions1992} it follows that
    \begin{equation*}
        u_{\ast}(t,x):=\liminf_{(n,t^\prime,x^\prime )\to (\infty, t,x)}u_n(t^\prime,x^\prime)
    \end{equation*}
    is a lower semicontinuous viscosity supersolution of \eqref{eq:pde01} with 
    \begin{equation*}
        F_{\ast}(t,x,u,p,M)=\liminf_{(n,t^\prime, x^\prime, u^\prime, p^\prime, M^\prime )\to (\infty, t,x,u,p,M)}F^n(t^\prime,x^\prime,u^\prime,p^\prime,M^\prime)
    \end{equation*}
    instead of $F$.
    However, from $g^n \nearrow g$ it follows that $F^n\nearrow F$.
    Since in addition $u_n \nearrow u$, Lemma \ref{lemma_cosima} below implies that $u_\ast=u$ and $F_\ast=F$, showing the existence.

    Let us finish the proof by showing the minimality of $u$.
    Let then $v$ be a lower semi-continuous viscosity supersolution of the PDE \ref{eq:pde01}.
    Since $F^n\leq F$ and $\varphi^n\leq \varphi$, it follows that $v$ is in particular a lower semi-continuous viscosity supersolution of the PDE \ref{eq:pde01} with $F^n$ and $\varphi^n$ instead of $F$ and $\varphi$ for every $n$.
    However, in this Lipschitz case, $u^n$ is in particular the unique lower semi-continuous viscosity supersolution of the PDE \ref{eq:pde01} with $F^n$ and $\varphi^n$.
    In particular, it follows that $v\geq u^n$ for every $n$.
    We thus deduce that $v\geq \sup_n u^n=u$, completing the proof.
\end{proof}
\begin{lemma}\label{lemma_cosima}
    Let $(h^n)$ be an increasing sequence of real valued continuous functions on $\mathcal O$ where $\mathcal O$ is a metric space.
    Then, for $h := sup_n h^n$ it holds that
    \begin{equation*}
        h(z) = h_*(z):=\liminf_{(n,z^\prime)\to(\infty,z)}h^n(z^\prime), \quad z \in \mathcal O.
    \end{equation*}
\end{lemma}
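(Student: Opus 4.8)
The statement is a small real-analysis fact: if $h^n$ is an increasing sequence of real-valued continuous functions on a metric space $\mathcal O$ with $h=\sup_n h^n$, then $h$ coincides with its "epi-liminf" $h_*(z)=\liminf_{(n,z')\to(\infty,z)}h^n(z')$. The plan is to prove the two inequalities $h_*(z)\le h(z)$ and $h_*(z)\ge h(z)$ separately.

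For $h_*(z)\le h(z)$, I would fix $z$ and test the liminf along the particular sequence $z'=z$ (constant in $z'$, with $n\to\infty$): since $(h^n(z))_n$ is increasing with supremum $h(z)$, we get $\liminf_n h^n(z)=h(z)$, and the liminf over the larger net of pairs $(n,z')\to(\infty,z)$ can only be smaller or equal, hence $h_*(z)\le h(z)$. (If $h(z)=+\infty$ this inequality is vacuous; note $h$ may a priori take the value $+\infty$ even though each $h^n$ is real-valued.)

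For the reverse inequality $h_*(z)\ge h(z)$, I would fix any $\gamma<h(z)$ and show $h_*(z)\ge\gamma$; letting $\gamma\uparrow h(z)$ then finishes. Since $h(z)=\sup_n h^n(z)>\gamma$, pick $N$ with $h^N(z)>\gamma$. By continuity of the single function $h^N$ at $z$, there is a radius $\delta>0$ such that $h^N(z')>\gamma$ for all $z'$ with $d(z',z)<\delta$. Now for any pair $(n,z')$ with $n\ge N$ and $d(z',z)<\delta$, monotonicity of the sequence gives $h^n(z')\ge h^N(z')>\gamma$. Hence every tail of any sequence $(n_k,z_k)\to(\infty,z)$ eventually satisfies $n_k\ge N$ and $d(z_k,z)<\delta$, so $h^{n_k}(z_k)>\gamma$ for large $k$, giving $\liminf_k h^{n_k}(z_k)\ge\gamma$. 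Taking the infimum over all such sequences yields $h_*(z)\ge\gamma$, and then $h_*(z)\ge h(z)$.

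The only mild subtlety — and the closest thing to an "obstacle" — is bookkeeping around the possibility that $h(z)=+\infty$: the first inequality is then trivial, while in the second inequality $\gamma$ ranges over all reals and the argument above is unchanged, so $h_*(z)=+\infty=h(z)$ as well. Everything else is a direct application of monotonicity (to push the bound from index $N$ up to all $n\ge N$) together with continuity of the single function $h^N$ (to get a uniform lower bound on a neighbourhood of $z$); no uniform convergence or Dini-type argument is needed.
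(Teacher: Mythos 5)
Your proof is correct and rests on the same two ingredients as the paper's: monotonicity to reduce the comparison to a single fixed $h^N$ (resp.\ $h^k$), and continuity of that one function near $z$. The only cosmetic difference is in the easy inequality $h_*\le h$, which you obtain by testing the constant sequence $z'=z$, while the paper runs a short $\varepsilon$--$\delta$ argument over punctured balls; both are fine under the standard (non-deleted) reading of the relaxed $\liminf$.
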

\begin{proof}
    Fix some $z\in\mathcal O$.
    By definition of the limes inferior we may pass to a subsequence, denoted by $(n,z_n)$, satisfying $\lim_n h^n(z_n)=h_*(z)$.
    For a fixed $k$, the sequence being increasing implies that $h^n(z_n)\geq h^k(z_n)$ for all $n$ sufficiently large.
    Combining the former with the continuity of $h^k$ yields
    \begin{equation*}
        h_*(z)\geq \lim_n h^k(z_n) = h^k(z), \quad \text{for all } k,
    \end{equation*}
    implying in turn that $h_*(t,z)\ge h(t,z)$.
    Conversely, for every $\varepsilon>0$ there exists $k$ such that for all $n\ge k$ it holds
    \begin{equation*}
        h_*(z)\leq \inf_{m\geq k}\inf_{\substack{z^\prime\neq z\\ d(z,z^\prime)\leq 1/n}} h^m(z^\prime) + \varepsilon\leq \inf_{\substack{z^\prime \neq z\\d(z,z^\prime)\le 1/n}} h^{k}(z^\prime) + \varepsilon.
    \end{equation*}
    By sending $n$ to infinity and subsequently using the continuity of $h^k$ as well as the definition of $h$ the above yields $h_*(z)\le h^{k}(z)+\varepsilon\le h(z)+\varepsilon$.
    As $\varepsilon$ was arbitrary, this finishes the proof. 
\end{proof}

\begin{appendix}
\section{Epi-convergence: technical results}\label{appendix01}
Throughout, let $X,Y,Z$ denote three finite dimensional euclidean real vector spaces.
We denote by $\text{cl}(C)$ and $\text{clco}(C)$ the closure and closure of the convex hull of a set $C$, respectively.
For a sequence of sets $(C^n)$, we define the \emph{Painlev\'e-Kuratowski} limit superior and the \emph{Closed Convex} limit superior by
\begin{equation*}
    \mathfrak{e}\text{-}\limsup C^n=\cap_n\text{cl}\left( \cup_{k\geq n} C^k\right)\quad\text{and}\quad\mathfrak{c}\text{-}\limsup C^n=\cap_n\text{clco}\left( \cup_{k\geq n} C^k\right),
\end{equation*}
respectively, see \citep[Chapter 4]{rockafellar02} and \citep{loehne2006}.
For a sequence $(f^n)$ of functions, we define $\mathfrak{e}\text{-}\liminf f^n$ or $\mathfrak{c}\text{-}\liminf f^n$ as the function the epigraph of which corresponds to the Painlev\'e-Kuratowsky or Closed-Convex limit superior of the epigraphs of $(f^n)$, respectively, see \citep[Chapter 7, Section B]{rockafellar02}.
In other terms,
\begin{equation*}
    \mathfrak{e}\text{-}\liminf f^n=\sup_n \text{cl}\left\{ f^k\colon k\geq n \right\}\quad \text{and}\quad \mathfrak{c}\text{-}\liminf f^n=\sup_n \text{clco}\left\{ f^k \colon k\geq n \right\}
\end{equation*}
where $\text{cl}\{f^k\colon k\geq n\}$ and $\text{clco}\{f^k\colon k\geq n\}$ is the greatest lower semicontinuous minorant and greatest lower semicontinuous convex minorant of every $f^k$ for $k\geq n$, respectively.
Clearly, it holds
\begin{equation*}
    \mathfrak{c}\text{-}\liminf f^n \leq \text{clco}\left\{\mathfrak{e}\text{-}\liminf f^n\right\}.
\end{equation*}
We denote by $C^\infty:=\{ x \colon \lambda_n x_n \to x\text{ for some }(x_n)\subseteq C \text{ and }\lambda_n \downarrow 0\}$ the \emph{horizon cone} of a set $C$.
Given a proper closed convex function $f$, we denote by $f^\infty$ its \emph{horizon function}, that is the function the epigraph of which corresponds to the horizon cone of the epigraph of $f$.

\begin{proposition}\label{lem:convergence}
    Let $f:X\times Z\to ]-\infty,\infty]$ be a proper lower semicontinuous function that is convex in $z$.
    Let $(x_n)\subseteq X$ with $x_n\to x$ and denote $f^n:=f(x_n,\cdot)$ and $h:=\textnormal{clco}\{f^n\colon n\}$.
    Suppose further that $(f^n)^\infty=h^\infty$.
    Then it holds
    \begin{equation*}\label{eq:central}
        f(x,z)\leq \text{clco}\left\{\mathfrak{e}\text{-}\liminf f^n\right\}(z)=\mathfrak{c}\text{-}\liminf f^n(z), \quad z \in Z.
    \end{equation*}
\end{proposition}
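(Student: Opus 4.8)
The plan is to split the chain into three independent pieces: the inequality $f(x,\cdot)\le\text{clco}\{\mathfrak{e}\text{-}\liminf f^n\}$, the easy half $\mathfrak{c}\text{-}\liminf f^n\le\text{clco}\{\mathfrak{e}\text{-}\liminf f^n\}$ of the displayed equality (this is the ``clearly'' recorded just before the statement), and the hard half $\text{clco}\{\mathfrak{e}\text{-}\liminf f^n\}\le\mathfrak{c}\text{-}\liminf f^n$, which is where the hypothesis $(f^n)^\infty=h^\infty$ is indispensable.

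The first piece is soft. By the standard description of the lower epi-limit, $\mathfrak{e}\text{-}\liminf f^n(z)=\inf\{\liminf_k f^k(z_k):z_k\to z\}$; since $x_k\to x$, joint lower semicontinuity of $f$ gives $\liminf_k f(x_k,z_k)\ge f(x,z)$ for any such $z_k\to z$, hence $\mathfrak{e}\text{-}\liminf f^n\ge f(x,\cdot)$. As $f(x,\cdot)$ is convex and lower semicontinuous it is its own closed convex hull, so monotonicity of $\text{clco}$ yields $f(x,\cdot)=\text{clco}\{f(x,\cdot)\}\le\text{clco}\{\mathfrak{e}\text{-}\liminf f^n\}$, which together with the equality will finish the proof.

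For the hard half I would pass to epigraphs. Put $E^k:=\text{epi}\,f^k$, $\mathcal{E}^n:=\text{cl}\big(\cup_{k\ge n}E^k\big)$ and $h_n:=\text{clco}\{f^k:k\ge n\}$, so that $(\mathcal{E}^n)$ is a nonincreasing sequence of closed sets, $\text{clco}(\mathcal{E}^n)=\text{epi}\,h_n$, $\text{epi}(\mathfrak{e}\text{-}\liminf f^n)=\cap_n\mathcal{E}^n$ and $\text{epi}(\mathfrak{c}\text{-}\liminf f^n)=\cap_n\text{clco}(\mathcal{E}^n)$. Since $\text{clco}\{\mathfrak{e}\text{-}\liminf f^n\}$ has epigraph $\text{clco}\big(\cap_n\mathcal{E}^n\big)$, the hard inequality is precisely the set identity $\text{clco}\big(\cap_n\mathcal{E}^n\big)=\cap_n\text{clco}(\mathcal{E}^n)$; here $\subseteq$ is immediate from $\cap_n\mathcal{E}^n\subseteq\mathcal{E}^n\subseteq\text{clco}(\mathcal{E}^n)$, so only $\supseteq$ needs work. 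The role of the hypothesis is to pin down the horizon cones: in the only non-trivial case $\cap_n\text{clco}(\mathcal{E}^n)\ne\emptyset$ the functions $h=h_1$ and $h_n$ are proper, and from $h\le h_n\le f^k$ for $k\ge n$ one gets $E^k\subseteq\mathcal{E}^n\subseteq\text{epi}\,h_n\subseteq\text{epi}\,h$ (for at least one, hence infinitely many, proper $f^k$ per tail); since horizon cones are monotone under inclusion and $(E^k)^\infty=\text{epi}((f^k)^\infty)=\text{epi}(h^\infty)=(\text{epi}\,h)^\infty$ by assumption, all of $\mathcal{E}^n$, $\text{epi}\,h_n$ and the relevant $E^k$ share the common horizon cone $K:=\text{epi}(h^\infty)$.

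This set identity is the heart of the matter, and it genuinely fails once the horizon cones do not stabilise — for instance for epigraphs of indicator functions of single points running off to infinity, whose convexified tails acquire recession directions absent from the Painlev\'e--Kuratowski limit. Under the stabilisation we have here, however, Painlev\'e--Kuratowski and closed-convex set-limits coincide, i.e. taking closed convex hulls commutes with the nested intersection; I would obtain this either by citing the convergence theory for convex sets in \citep[Ch.~4]{rockafellar02} and \citep{loehne2006}, or by arguing directly: fix $p\in\cap_n\text{clco}(\mathcal{E}^n)$, represent $p$ for each $n$ as a limit of convex combinations of at most $\dim+1$ points of $\cup_{k\ge n}E^k$ by Carath\'eodory, use the common horizon cone $K$ to keep the bounded part of these combinations under control while the remainder escapes only along $K$, and send $k\to\infty$ to place the limiting points in $\cap_n\mathcal{E}^n$, so that $p\in\text{clco}(\cap_n\mathcal{E}^n)$. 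The main obstacle is exactly this last step; everything else is bookkeeping. Combining the three pieces gives $f(x,z)\le\text{clco}\{\mathfrak{e}\text{-}\liminf f^n\}(z)=\mathfrak{c}\text{-}\liminf f^n(z)$.
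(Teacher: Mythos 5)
Your proposal is correct and follows essentially the same route as the paper: the pointwise inequality via the sequential characterization of the lower epi-limit and joint lower semicontinuity of $f$, and the equality by passing to epigraphs, using the hypothesis $(f^n)^\infty=h^\infty$ to stabilize the horizon (= recession) cones, and invoking the Painlev\'e--Kuratowski/closed-convex set-convergence theory of L\"ohne and Rockafellar--Wets. Your optional direct Carath\'eodory argument is only a sketch, but since your primary route is the same citation the paper uses, nothing is missing.
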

\begin{proof}
    If $f^n \equiv \infty$ except for finitely many $n$, then the inequality is trivially satisfied.
    Without loss of generality we may thus assume $f^n$ to be proper for every $n\in\N$.
    By lower semicontinuity of $f$ and \citep[Proposition 7.2]{rockafellar02} it follows that
    \begin{align*}
        \mathfrak{e}\text{-}\liminf f^n(z) & = \min \left\{ \alpha \in \mathbb{R}\colon \liminf f(x_n,z_n)=\alpha \text{ for some } z_n\to z\right\}\geq f(x,z),
    \end{align*}
    and since $f$ is lower semicontinuous and convex in $z$, we deduce
    \begin{equation*}
        f(x, z)\leq \text{clco}\{ \mathfrak{e}\text{-}\liminf f^n\}(z), \quad z \in Z.
    \end{equation*}
    
    Let now $C^n=\text{epi}f^n$.
    By assumption, $C^n$ is non-empty, closed and convex for every $n\in\N$.
    Furthermore, as $C:=\text{epi}(h)=\text{clco}(\cup_{n}C^n)$, it holds that $(C^n)^\infty=C^\infty$ for every $n$.
    Since horizon and recession cones coincide in finite dimensions for non-empty closed and convex sets, \citep[See][Theorem 8.2]{rockafellar01}, the conditions of \citep[Theorem 4.4]{loehne2006} are fulfilled and therefore we obtain $\text{clco}(\mathfrak{e}\text{-}\limsup C^n) = \mathfrak{c}\text{-}\limsup C^n$.
    This in turn implies 
    \begin{equation*}
        \text{clco}\left\{\mathfrak{e}\text{-}\liminf f^n\right\}(z)=\mathfrak{c}\text{-}\liminf f^n(z), \quad z \in Z,
    \end{equation*}
    finishing the proof.
\end{proof}

In the following, we consider the convex hull only with respect to certain dimensions which notation-wise is stressed by means of an index.
For instance, the convex hull in the second variable $z$ of a set $C\subseteq Y\times Z$  is denoted by $\text{co}_z(C)$.
\begin{proposition}\label{lem:convergence2}
    Let $f:X\times Y\times Z\to ]-\infty,\infty]$ be a proper lower semicontinuous function that is convex in $z$ and monotone in $y$.
    Suppose that for every bounded sequence $(x_n)\subseteq X$, $y \in Y$ and $\gamma \in \mathbb{R}$ the set $\cup_n \{z\colon f(x_n,y,z)\leq \gamma\}$ is contained in a compact set.
    Then, denoting $f^n:=f(x_n,\cdot)$, for every $(x_n)\subseteq X$ with $x_n \to x$  it holds
    \begin{equation*}
        f\left( x,y,z \right)\leq \textnormal{cl}\textnormal{co}_z\left\{ \mathfrak{e}\text{-}\liminf f^n \right\}(y,z)=\mathfrak{c}_z\text{-}\liminf f^n(y,z), \quad y,z \in Y\times Z.
    \end{equation*}
\end{proposition}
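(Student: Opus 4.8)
The plan is to follow the structure of the proof of Proposition~\ref{lem:convergence}, now forming the convex hull only in the $z$-variable and using the level-set compactness hypothesis in place of the horizon-cone condition $(f^n)^\infty=h^\infty$. As there, one first reduces to the case where every $f^n$ is proper. The pointwise step is then verbatim: since $f$ is lower semicontinuous, $\liminf f(x_n,y_n,z_n)\ge f(x,y,z)$ whenever $(y_n,z_n)\to(y,z)$, so the description of the epigraphical lower limit in \citep[Proposition~7.2]{rockafellar02} gives $\mathfrak{e}\text{-}\liminf f^n(y,z)\ge f(x,y,z)$ for all $(y,z)$; as $f(x,\cdot,\cdot)$ is itself jointly lower semicontinuous and convex in $z$, it is dominated by the greatest lower semicontinuous, convex-in-$z$ minorant of $\mathfrak{e}\text{-}\liminf f^n$, namely $\text{clco}_z\{\mathfrak{e}\text{-}\liminf f^n\}$. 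This proves the stated inequality $f(x,y,z)\le\text{clco}_z\{\mathfrak{e}\text{-}\liminf f^n\}(y,z)$.

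It remains to establish the identity $\text{clco}_z\{\mathfrak{e}\text{-}\liminf f^n\}=\mathfrak{c}_z\text{-}\liminf f^n$. One inequality is automatic, since $\mathfrak{c}_z\text{-}\liminf f^n$ is a jointly lower semicontinuous, convex-in-$z$ minorant of $\mathfrak{e}\text{-}\liminf f^n$ and hence lies below $\text{clco}_z\{\mathfrak{e}\text{-}\liminf f^n\}$. For the reverse inequality I would pass to the epigraphs $C^n=\text{epi}\,f^n\subseteq Y\times Z\times\R$ and argue slice by slice in $y$. First note that the compactness of $\cup_n\{z\colon f(x_n,y,z)\le\gamma\}$ together with the lower semicontinuity of $f$ forces the functions $f^n(y,\cdot)$ to be bounded below uniformly in $n$, so that $\text{epi}\,f^n(y,\cdot)$ and $\text{epi}\,\text{clco}_z\{f^n(y,\cdot)\colon n\}$ are non-empty closed convex sets in $(z,t)$ with the same horizon cone (the nonnegative $t$-axis direction, by compactness of the sublevel sets). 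Hence \citep[Theorem~4.4]{loehne2006} applies on each $y$-slice and yields the identity there. The remaining task is to propagate this slicewise identity to the joint statement, and this is where the monotonicity in $y$ is used: assuming without loss of generality that $f$ is increasing in $y$, each $f^n(\cdot,z)$ is increasing and — being lower semicontinuous — left-continuous in $y$, and the sublevel sets $\{z\colon f^n(y,z)\le t\}$ are nested increasingly in $y$; these facts let one invoke the slice-level compactness at a single value $\bar y$ below a neighbourhood of the reference point $(y_0,z_0)$ and match the joint closures and $z$-convexifications at $(y_0,z_0)$ with the corresponding slicewise objects at $y$-values increasing to $y_0$. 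Combined with the pointwise bound of the first step and the convexity of $f(x,\cdot)$ in $z$, this gives $\text{clco}_z\{\mathfrak{e}\text{-}\liminf f^n\}(y_0,z_0)\le\mathfrak{c}_z\text{-}\liminf f^n(y_0,z_0)$ and closes the proof.

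The main obstacle is exactly this last propagation: all three operations $\mathfrak{e}\text{-}\liminf$, $\mathfrak{c}_z\text{-}\liminf$ and $\text{clco}_z$ are built from the joint $(y,z)$-topology, while the convex hull and the compactness hypothesis act only on the $z$-coordinate, so one must check that the joint lower semicontinuous envelope of $f$ at a point is ``pulled down'' compatibly with the slicewise $z$-convexification — and monotonicity, i.e.\ one-sided continuity, in $y$ is precisely what guarantees this. An essentially equivalent, more computational route is to run the contradiction argument of Proposition~\ref{lem:convergence} directly: if $\text{clco}_z\{\mathfrak{e}\text{-}\liminf f^n\}(y_0,z_0)>\gamma>\mathfrak{c}_z\text{-}\liminf f^n(y_0,z_0)$, then for every $n$ the closure in the definition of $\text{clco}_z$ and Carath\'eodory's theorem in the $(\dim Z+1)$-dimensional $(z,t)$-space produce points near $(y_0,z_0,\gamma)$ realised as convex combinations in $(z,t)$, at a common level $y^n\to y_0$, of finitely many points of $\bigcup_{k\ge n}\text{epi}\,f^k$; monotonicity in $y$ and the compactness confine the $z$-components to a single compact set; passing to a convergent subsequence and using convexity of $f(x,\cdot)$ in $z$ together with the first step contradicts $\gamma<f(x,y_0,z_0)$.
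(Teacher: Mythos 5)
Your closing ``computational'' route is essentially the paper's proof; your primary, slicewise route is not, and has a genuine gap. The paper argues exactly along the lines of your last paragraph, but directly on the level sets $C^n_\gamma=\{(y,z)\colon f^n(y,z)\le\gamma\}$ rather than on epigraphs, and without a contradiction: a point $(y,z)\in\mathfrak{c}_z\text{-}\limsup C^n_\gamma$ is written via Carath\'eodory as a limit of $z$-convex combinations of points $(y_n,z_n^i)\in\cup_{k\ge n}C^k_\gamma$ sharing the common level $y_n$; monotonicity lets one dominate all these constraints by the single value $\tilde y=\sup_n y_n$ or $\inf_n y_n$, so that every $z_n^i$ lies in the one relatively compact set $\cup_n\{z\colon f(x_n,\tilde y,z)\le\gamma\}$; extracting limits $z^i$ places $(y,z^i)$ in $\mathfrak{e}\text{-}\limsup C^n_\gamma$ and hence $(y,z)$ in $\text{cl}(\text{co}_z(\mathfrak{e}\text{-}\limsup C^n_\gamma))$, which gives the stated identity of functions. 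Two loose ends in your version of this argument: (a) you aim the contradiction at $\gamma<f(x,y_0,z_0)$, which only re-derives the inequality $f(x,\cdot)\le\mathfrak{c}_z\text{-}\liminf f^n$ (enough for the application to \ref{rec}, but not the asserted equality $\text{clco}_z\{\mathfrak{e}\text{-}\liminf f^n\}=\mathfrak{c}_z\text{-}\liminf f^n$); the same limit points $(y_0,z^i)$ do yield the equality once you locate them in the Painlev\'e--Kuratowski upper limit of the level sets instead of comparing with $f(x,\cdot)$. (b) Working with epigraphs you must also control the $t$-components of the Carath\'eodory points, which need not stay bounded when some weight tends to zero and $f$ is unbounded below; phrasing the argument on level sets, as the paper does, sidesteps this.

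The slicewise route should be dropped rather than repaired. L\"ohne's theorem applied on each $y$-slice compares the \emph{slicewise} epigraphical and convex upper limits, whereas $\mathfrak{e}\text{-}\liminf$, $\mathfrak{c}_z\text{-}\liminf$ and $\text{clco}_z$ in the statement are all built from closures in the joint $(y,z)$-topology, and the $y$-sections of the joint objects can be strictly smaller than the corresponding slicewise objects. Your proposed propagation via one-sided continuity in $y$ is only a heuristic: left-continuity of increasing lower semicontinuous functions is a one-variable fact while $Y$ is a general finite-dimensional space here, and no actual matching of the joint and slicewise closures is carried out --- you flag this yourself as ``the main obstacle.'' The direct Carath\'eodory argument avoids the issue entirely because the $z$-convex combination has a single $y$-coordinate, so no slicewise-to-joint passage is ever required.
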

The argumentation is inspired by \citep[Lemma 1.1.9]{aubin2009}
\begin{proof}
    An argumentation analogous to the proof of Proposition \ref{lem:convergence} allows to assume that $f^n$ is proper for every $n$ and it holds
    \begin{equation*}
         f\left( x,y,z \right)\leq \text{cl}\text{co}_z\left\{ \mathfrak{e}\text{-}\liminf f^n \right\}(y,z), \quad y,z \in Y\times Z.
    \end{equation*}
    Furthermore, the relation
    \begin{equation}\label{eq:intermediate}
        \mathfrak{c}_z\text{-}\liminf f^n(y,z)\leq  \text{cl}\text{co}_z\left\{ \mathfrak{e}\text{-}\liminf f^n \right\}(y,z)\quad y,z \in Y\times Z.
    \end{equation}
    is naturally satisfied.
    Let $\gamma \in \mathbb{R}$ and define $C^n_\gamma:=\{(y,z)\colon f^n(y,z)\leq \gamma\}$.
    To show the reverse inequality in \eqref{eq:intermediate}, it is sufficient to show that $\text{clco}_{z}(\mathfrak{e}\text{-}\limsup C^n_\gamma)=\mathfrak{c}_z\text{-}\limsup C^n_\gamma$ for every $\gamma$.
    Let $(y,z) \in \mathfrak{c}_z\text{-}\limsup C^n_\gamma$ and with $d=\dim Z$ denote by $\Delta$ the $d+1$-dimensional simplex.
    By Caratheodory's Theorem, there exist sequences $(y_n), (z_n^i)_{i=1,\ldots,d+1},(\lambda_n)$ such that $(y_n,z_n^i) \in \cup_{k\geq n}C^k_\gamma $, $\lambda_n \in \Delta$, $y^n\to y$ and $\sum_i \lambda^i_n z_n^i\to z$.
    Up to a subsequence, we may assume that $\lambda_n \to \lambda \in \Delta$.
    Furthermore, for every $i$ it holds $(z_n^i)\subseteq \cup_n \{z\colon f(x_n,\tilde{y},z)\leq \gamma\}$ is contained in some compact set, since $(x_n)\subseteq X$ is bounded and where $\tilde{y}=\sup y_n$ or $\tilde{y}=\inf y_n$ depending on $f$ being increasing or decreasing in $y$.
    Hence, up to yet another subsequence, $z^i_n\to z^i$ holds for every $i$.
    In particular, $(y,z^i)\in \cap_n \text{cl}(\cup_{k\geq n}C^k_\gamma)$.
    Thus, $(y,z)=\lim (y_n, \sum_i \lambda^i_n z^i_n) =\sum \lambda^i (y,z^i) \in \text{cl}(\text{co}_z(\cap_n\text{cl}(\cup_{k\geq n}C^n_\gamma))= \text{cl}(\text{co}_{z}(\mathfrak{e}\text{-}\limsup C^n_\gamma))$ which ends the proof.
\end{proof}

\end{appendix}

\bibliographystyle{abbrvnat}
\bibliography{bibliography}
\end{document}